\documentclass[12pt]{amsart}

\newcommand{\ilimit}{\mbox{$\,\displaystyle{\lim_{\longleftarrow}}\,$}}

\usepackage[arrow,matrix,curve]{xy}\SilentMatrices
\def\xyma{\xymatrix@M.7em}

\oddsidemargin=-0.1in \evensidemargin=-0.1in \textwidth=6.6in
\topmargin=0.2in \textheight=8.6in
\newcommand{\para}{\par\vspace{.25cm}}
\newtheorem{Lemma}{Lemma}[section]

\newtheorem{Theorem}[Lemma]{Theorem}

\newtheorem{prop}[Lemma]{Proposition}
\newtheorem{Conjecture}[Lemma]{Conjecture}

\usepackage{amssymb}
\usepackage{amsfonts}
\usepackage{euscript}
\begin{document}

\title{Limits over categories of extensions}
\author{Roman Mikhailov and Inder Bir S. Passi}
\begin{abstract}
We consider limits over categories of extensions and show how certain well-known
functors on the category  of groups  turn out as such limits. We also discuss
higher (or derived) limits over categories of extensions.
\end{abstract}
\maketitle
\section{Introduction}
Let $k$ be a commutative ring with identity, and let $\EuScript C$
be one of the following categories: the category $\sf Gr$ of
groups, the category $\sf Ab$ of abelian groups,  the category ${\sf Ass}_k$
of associative algebras over $k$. Given an object $G\in\EuScript
C$,  let  ${\sf Ext_\EuScript C}(G)$ be the category whose
objects are  the extensions $H\rightarrowtail F\twoheadrightarrow
G$ in $\EuScript C$ with $G$ as the cokernel, and the morphisms
are the commutative diagrams of short exact sequences of the form
$$
\xyma{
H_1 \ar@{^{(}->}[r] \ar@{->}[d] & F_1 \ar@{->>}[r] \ar@{->}[d] & G \ar@{=}[d]\\
H_2 \ar@{^{(}->}[r] & F_2 \ar@{->>}[r] & G. }
$$
It is clearly natural
to consider also the full subcategory ${\sf Fext_\EuScript C}(G)$ of ${\sf
Ext_\EuScript C}(G)$ which consists of the short exact sequences $H\rightarrowtail F\twoheadrightarrow  G$
where $F$ is a free object in $\EuScript C$.
The category ${\sf Ext_{Gr}}(G)$ has been
studied extensively  from the point of view of the theory of  cohomology of groups (see, for example,  \cite{GR:77}, \cite{GR:80}).   The general question which we wish to address here can be formulated as follows: how can one
study the properties of objects $G\in \sf Ob(\EuScript C)$ from the
 properties of the category ${\sf Fext_\EuScript C}(G)$?
\para
Let $\sf C$ be a small category and ${\mathfrak F} : \sf C
\longrightarrow \EuScript C$ a covariant functor. The inverse limit
$\ilimit {\mathfrak F}$ of ${\mathfrak F}$, by definition,  consists of those families $(x_{c})_{c\in \sf C}$ in the
direct product $\prod_{c \in \sf C} {\mathfrak F}(c)$  which are compatible in
the following sense: For any two objects $c,\,c' \in \sf C$ and any
morphism $a \in \mbox{Hom}_{\sf C}(c,\,c')$, we have ${\mathfrak
F}(a)(x_{c}) = x_{c'} \in {\mathfrak F}(c')$.
Let ${\mathfrak F},\,{\mathfrak G}$ be two functors from $\sf C$ to the category
$\EuScript C$. Then a natural transformation $\eta : {\mathfrak F}
\longrightarrow {\mathfrak G}$ induces a homomorphism
\[ \ilimit \eta : \ilimit {\mathfrak F} \longrightarrow \ilimit {\mathfrak G} , \]
by mapping any element $(x_{c})_{c\in\sf C} \in \ilimit {\mathfrak
F}$ onto $(\eta_{c}(x_{c}))_{c\in \sf C} \in \ilimit {\mathfrak
G}$. In this way, $\ilimit$ itself becomes a functor from the
functor category ${\EuScript C}^{\sf C}$ to the category $\EuScript C$.
\para
Our aim in this paper is to consider the categories ${\sf Ext}_{\EuScript C}(G), \ {\sf
Fext_\EuScript C}(G)$   and limits $\ilimit
\mathfrak F$ for functors
$
\mathfrak F_G: {\sf Ext_\EuScript C}(G)\to {\sf Gr}, \ \mathfrak F_G: {\sf
Fext_\EuScript C}(G)\to {\sf Gr}.
$
Suppose these functors are natural in the following sense:\para
Given a morphism $\alpha: G_1\to G_2$ in $\EuScript C$, every
commutative diagram of the form
$$
\xyma{
H_1 \ar@{^{(}->}[r] \ar@{->}[d] & F_1 \ar@{->>}[r] \ar@{->}[d] & G_1 \ar@{->}[d]^\alpha\\
H_2 \ar@{^{(}->}[r] & F_2 \ar@{->>}[r] & G_2 }
$$
induces a homomorphism of groups
$$
\mathfrak F_{G_1}\{H_1\rightarrowtail F_1\twoheadrightarrow G_1\}\to \mathfrak
F_{G_2}\{H_2\rightarrowtail F_2\twoheadrightarrow G_2\},
$$
compatible with morphisms in ${\sf Ext_\EuScript C}(G_1)$ and ${\sf
Ext_\EuScript C}(G_2)$, i.e., every commutative diagram of the form
\begin{equation}\label{dz1}{\small { \xyma{& & H_1' \ar@{^{(}->}[rr] \ar@{->}[ld]
\ar@{-}[d] && F_1' \ar@{-}[d] \ar@{->}[ld] \ar@{->>}[rr] && G_1 \ar@{->}[ld]_\alpha \ar@{=}[dd]\\
& H_2' \ar@{->}[dd] \ar@{^{(}->}[rr] & \ar@{->}[d] & F_2' \ar@{->}[dd] \ar@{->>}[rr] & \ar@{->}[d] &G_2 \ar@{=}[dd]\\
& & H_1 \ar@{^{(}-}[r] \ar@{->}[ld] & \ar@{->}[r] & F_1 \ar@{->}[ld] \ar@{-}[r] & \ar@{->>}[r] &G_1 \ar@{->}[ld]_\alpha\\
& H_2 \ar@{^{(}->}[rr] & & F_2  \ar@{->>}[rr] && G_2} }}
\end{equation}
induces the following commutative diagram of groups
\begin{equation}\label{dz2} \xyma{\mathfrak F_{G_1}\{H_1'\rightarrowtail F_1'\twoheadrightarrow G_1\}
\ar@{->}[r] \ar@{->}[d]&
\mathfrak F_{G_1}\{H_1\rightarrowtail F_1\twoheadrightarrow G_1\} \ar@{->}[d]\\
\mathfrak F_{G_2}\{H_2'\rightarrowtail F_2'\twoheadrightarrow
G_2\} \ar@{->}[r] & \mathfrak F_{G_2}\{H_2\rightarrowtail
F_2\twoheadrightarrow G_2\}.} \end{equation} In that case the
limit of the functors over categories of extensions defines a
functor ${\EuScript C\to \sf Gr}$ by setting $G\mapsto \ilimit
\mathfrak F_G$.
\para To clarify our point of view further, let us recall some known examples which have motivated our  present  investigation. Let ${\EuScript C=\sf Gr},$ $G$ a group, $\mathbb Z[G]$ its integral group ring and $M$ a $\mathbb
Z[G]$-module. For $n\geq 1$, define the functor
\begin{equation}\label{func1}
\mathfrak F_n: {\sf Fext_\EuScript C}(G)\to {\sf Ab}
\end{equation}
by setting $$\mathfrak F_n: \{ R\rightarrowtail F\twoheadrightarrow G\}\mapsto
(R_{ab})^{\otimes n}\otimes_{\mathbb Z[G]}M,$$ where $R_{ab}$ denotes the abelianization of $R$,  the action of
$G$ on $R_{ab}^{\otimes n}$ is diagonal and is defined via conjugation in $F$. It
is shown by I. Emmanouil and R. Mikhailov in \cite{EM} that $\ilimit \mathfrak F_n$ is isomorphic to the homology group $H_{2n}(G,\, M)$:
\begin{equation}\label{func11} \ilimit \mathfrak F_n\simeq H_{2n}(G,\, M).
\end{equation}
\para
For any group $G$ and  field $k$ of characteristic $0$ viewed as a trivial $G$-module,  the homology groups $H_n(G,\,k)$, $n\geq 2$,  appear as inverse limits for suitable natural functors defined on the category ${\sf Ext_{Gr}(G)} $ (see \cite{EP} for details).

\para
Consider the category ${\EuScript C}={\sf Ass}_{\mathbb Q}$ of
associative algebras over $\mathbb Q$, the field of rationals. For
integers $n\geq 1,$ and an associative algebra $A$ over $\mathbb
Q$, consider the functors
$$
\mathfrak F_n: {\sf Ext_\EuScript C}(A)\to \mathbb
Q\text{-modules}
$$
given by setting
\begin{equation}\label{func2}
\mathfrak F_n: \{I\rightarrowtail R\twoheadrightarrow A\}\mapsto R/(I^{n+1}+[R,\,R]),
\end{equation}
where $[R,\,R]$ is the $\mathbb Q$-submodule of $R$, generated by the
elements $rs-sr$ with  $r,\,s\in R$. It has been  shown by D. Quillen in \cite{Q}
that the inverse limit $\ilimit \mathfrak F_n$ is isomorphic to the even cyclic homology group $HC_{2n}(A,\,\mathbb Q)$:
\begin{equation}\label{func22}
HC_{2n}(A,\,\mathbb Q)\simeq \ilimit \mathfrak F_n.
\end{equation}
Furthermore, the Connes suspension map $S: HC_{2n}(A,\,\mathbb Q)\to
HC_{2n-2}(A,\,\mathbb Q)$ can be obtained as follows:
$$
\xyma{HC_{2n}(A,\,\mathbb Q) \ar@{=}[r] \ar@{->}[d]^S& \ilimit
\mathfrak F_n
\ar@{->}[d]\\
HC_{2n-2}(A,\,\mathbb Q) \ar@{=}[r] & \ilimit \mathfrak F_{n-1}}
$$
where the right hand side map is induced by the natural projection
$$
R/(I^{n+1}+[R,\,R])\to R/(I^n+[R,\,R]).
$$\para
The motivation for our investigation should now be  clear from the
above examples: one takes quite simple functors, like
(\ref{func1}) and (\ref{func2}), on the appropriate categories of
extensions and asks for the corresponding inverse limits.
It is then also natural to consider the derived functors
$$
\ilimit^i: {\sf Ab}^{{\sf Fext}(G)}\to \sf Ab
$$of the limit functor.
Every functor $\mathfrak F: {\sf Fext}_\EuScript C(G)\to \sf Ab$ which is
natural in the above-mentioned  sense (see diagrams (\ref{dz1}) and
(\ref{dz2})) determines a series of functors $\EuScript C\to \sf Ab$ by
setting $G\mapsto \ilimit^i \mathfrak F\in {\sf Ab},\ i\geq 0$.
\para

We now briefly describe the contents of the present paper.
We begin by recalling in Section 2  two properties of the limits given in
 \cite{EM} and \cite{EP}. The first (Lemma \ref{lemmaem}) provides a set of
 vanishing conditions for $\ilimit {\mathfrak F}$ while, the second states that $\ilimit {\mathfrak F}$
 embeds in $\mathfrak F(c_0)$ for every quasi-initial object $c_0$.  In Section 3  (Theorems 3.1 and 3.4)
 we show how the derived functors in the sense of A. Dold and D. Puppe \cite{DP} of certain standard non-additive functors
 on {\sf Ab}, like tensor power, symmetric power, exterior power, are realized as limits of suitable functors over extension categories.  In Section 4 we discuss higher limits and prove (Theorem 4.4) that for the functor
$$\mathfrak F:{\sf Fext}_{\sf Gr}(G)\to {\sf Ab},\quad (R\rightarrowtail F\twoheadrightarrow
G)\mapsto R/[F,\,R],$$ $\ilimit^1\mathfrak F$ is non-trivial if $G$ is not a perfect group.  We  conclude with some remarks and possibilities for further work in Section 5.
\para\vspace{.5cm}
\section{Properties of limits recalled}
\vspace{.5cm}  Recall that the coproduct of two
objects $a$ and $b$  in a category  $\sf C$ is an object $a \star b$ which is
endowed with two morphisms $\iota_{a} : a \longrightarrow a \star
b$ and $\iota_{b} : b \longrightarrow a \star b$ having the
following universal property:
\begin{quote}
For any object $c$ of $\sf C$ and
any pair of morphisms $f : a \longrightarrow c$ and $g : b
\longrightarrow c$, there is a unique morphism $h : a \star b
\longrightarrow c$, such that $h \circ \iota_{a} = f$ and $h \circ
\iota_{b} = g$. The morphism $h$ is usually denoted by $(f,\,g)$.\end{quote}
\para
We recall from \cite{EM} the following Lemma  which provides certain conditions which imply the
triviality of the inverse limit.

\begin{Lemma}\cite{EM}\label{lemmaem}
Let $\sf C$ be a small category and ${\mathfrak F} : \sf C
\longrightarrow \sf Ab$ a functor to the category of abelian
groups, and suppose that the following conditions are satisfied:
\begin{quote}
$(i)$ Any two objects $a,\,b$ of $\sf C$ have a coproduct
    $(a \star b ,\, \iota_{a} ,\, \iota_{b})$ as above.
\para\noindent
$(ii)$ For any two objects $a,\,b$ of $\sf C$ the morphisms
     $\iota_{a} : a \longrightarrow a \star b$ and\linebreak
     $\iota_{b} : b \longrightarrow a \star b$ induce
     a monomorphism
     \[ ({\mathfrak F}(\iota_{a}),\,{\mathfrak F}(\iota_{b})) :
        {\mathfrak F}(a) \oplus {\mathfrak F}(b) \longrightarrow
        {\mathfrak F}(a \star b) \]
     of abelian groups.\end{quote}
Then, the inverse limit $\ilimit {\mathfrak F}$ is the zero group.
\end{Lemma}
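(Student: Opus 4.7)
The plan is to prove the lemma by showing that any compatible family $(x_c)_{c \in \sf C} \in \ilimit \mathfrak F$ must be identically zero, and the natural move is to apply the coproduct construction with $b = a$.

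First I would fix an object $a \in \sf C$ and form the coproduct $a \star a$ with its two structure morphisms, which I'll denote $\iota_1, \iota_2 : a \to a \star a$ (the analogues of $\iota_a$ and $\iota_b$ when the two factors coincide). By the defining compatibility condition of the inverse limit applied to both $\iota_1$ and $\iota_2$, I have
\[
\mathfrak F(\iota_1)(x_a) \;=\; x_{a \star a} \;=\; \mathfrak F(\iota_2)(x_a).
\]
Subtracting (which is legal since $\mathfrak F$ takes values in $\sf Ab$) yields $\mathfrak F(\iota_1)(x_a) - \mathfrak F(\iota_2)(x_a) = 0$.

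Next I would interpret this vanishing through the map of condition $(ii)$. The homomorphism
\[
(\mathfrak F(\iota_1),\,\mathfrak F(\iota_2))\colon \mathfrak F(a) \oplus \mathfrak F(a) \longrightarrow \mathfrak F(a \star a)
\]
sends $(u,v) \mapsto \mathfrak F(\iota_1)(u) + \mathfrak F(\iota_2)(v)$. The identity above says precisely that $(x_a,\,-x_a)$ lies in the kernel of this map. By hypothesis $(ii)$ the map is a monomorphism, so $(x_a,-x_a) = (0,0)$, hence $x_a = 0$. Since $a$ was arbitrary, the compatible family is zero, and $\ilimit \mathfrak F = 0$.

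The argument is short, and the only conceptual step worth flagging is the idea of feeding $a \star a$ (rather than $a \star b$ with $b \neq a$) into condition $(ii)$: the two distinct inclusions of the same object into its self-coproduct produce, via compatibility, an element of the kernel that forces $x_a$ itself to vanish. I do not anticipate a genuine obstacle; once this self-coproduct trick is in place the rest is bookkeeping in an abelian group.
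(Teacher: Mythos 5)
Your proposal is correct and follows exactly the paper's own argument: form the self-coproduct $a\star a$, use compatibility along the two inclusions to see that $(x_a,-x_a)$ lies in the kernel of the injective map $(\mathfrak F(\iota_1),\mathfrak F(\iota_2))$, and conclude $x_a=0$. No differences worth noting.
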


Indeed, let  $(x_{c})_{c\in Ob(\sf C)} \in \ilimit {\mathfrak F}$
be a compatible family and fix an object $a$ of $\sf C$. We
consider the coproduct $a \star a$ of two copies of $a$ and the
morphisms $\iota_{1} : a \longrightarrow a \star a$ and $\iota_{2}
: a \longrightarrow a \star a$. Then, we have
\[ {\mathfrak F}(\iota_{1})(x_{a}) = x_{a \star a} =
   {\mathfrak F}(\iota_{2})(x_{a}) \]
and hence the element $(x_{a},-x_{a})$ is contained in the kernel
of the additive map
\[ ({\mathfrak F}(\iota_{1}),{\mathfrak F}(\iota_{2})) :
   {\mathfrak F}(a) \oplus {\mathfrak F}(a) \longrightarrow
   {\mathfrak F}(a \star a) . \]
In view of our assumption, this latter map is injective and hence
$x_{a}=0$. Since this is the case for any object $a$ of $\sf C$,
we conclude that the compatible family $(x_{c})_{c\in Ob(\sf C)}$
vanishes, as asserted.

\para
We next mention  another  property of the limit functor.
 Recall that an object $c_0$ of a category $\sf C$ is called {\it quasi-initial} if the set
Hom$_{\sf C}(c_0,\,c)$ is non-empty for every  object $c$ of $\sf C$.
\para
\begin{Lemma}\cite{EP}\label{ep}
Let $\sf C$ be a category with a quasi-initial object $c_0$. Then,
for any functor ${\mathfrak F} : \sf C \longrightarrow \sf Ab$,
the natural map
$$
\ilimit \mathfrak F\to \mathfrak F(c_0)
$$
is injective, whereas its image consists of those elements $x\in
\mathfrak F(c_0)$ which equalize any pair of maps $\mathfrak
F(f_i): \mathfrak F(c_0)\to \mathfrak F(c),\ i=1,\,2$, where
$f_1,\,f_2\in Hom_{\sf C}(c_0,\,c)$ (i.e., $\mathfrak
F(f_1)(x)=\mathfrak F(f_2)(x)$).
\end{Lemma}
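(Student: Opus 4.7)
The plan is to unpack both statements directly from the definition of the inverse limit, using the existence of morphisms out of $c_0$ in a systematic way.

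First I would establish injectivity. Given a compatible family $(x_c)_{c\in \sf C}\in\ilimit\mathfrak F$ whose image $x_{c_0}\in\mathfrak F(c_0)$ vanishes, I fix an arbitrary object $c$ of $\sf C$ and choose any morphism $f\colon c_0\to c$, which exists by the quasi-initial hypothesis. Compatibility of the family forces $x_c=\mathfrak F(f)(x_{c_0})=\mathfrak F(f)(0)=0$. Since $c$ was arbitrary, the whole family vanishes, so the projection $\ilimit\mathfrak F\to\mathfrak F(c_0)$ has trivial kernel.

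Next I would verify the two inclusions that characterize the image. For the easy direction, if $(x_c)$ is a compatible family and $f_1,f_2\colon c_0\to c$ are any two morphisms, then compatibility applied to each gives $\mathfrak F(f_1)(x_{c_0})=x_c=\mathfrak F(f_2)(x_{c_0})$, so $x:=x_{c_0}$ equalizes every such pair. For the converse I would reconstruct the family: given $x\in\mathfrak F(c_0)$ that equalizes every parallel pair out of $c_0$, define $x_c:=\mathfrak F(f)(x)$ for an arbitrary choice of morphism $f\colon c_0\to c$. Independence of the choice of $f$ is precisely the equalizing hypothesis, so $x_c$ is well-defined, and clearly $x_{c_0}=x$ (take $f=\mathrm{id}_{c_0}$).

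Finally I would check compatibility of $(x_c)_{c\in\sf C}$ under a general morphism $\varphi\colon c\to c'$ in $\sf C$. Pick any $f\colon c_0\to c$; then $\varphi\circ f\colon c_0\to c'$ is a morphism out of $c_0$, so by the definition $x_{c'}=\mathfrak F(\varphi\circ f)(x)=\mathfrak F(\varphi)\mathfrak F(f)(x)=\mathfrak F(\varphi)(x_c)$, as required. No step here is deep; the only thing that needs care is keeping the role of the equalizing condition straight — it is both what makes the reconstructed family well-defined on objects and, combined with functoriality, what ensures compatibility under morphisms. I do not anticipate any genuine obstacle.
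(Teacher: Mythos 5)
Your proof is correct and complete: injectivity, the forward inclusion, and the reconstruction of a compatible family from an equalizing element (including the compatibility check via $\varphi\circ f$) are all handled properly. The paper does not reproduce a proof of this lemma --- it simply cites \cite{EP} --- and your argument is exactly the standard one that the cited source gives, namely unwinding the definition of $\ilimit\mathfrak F$ and using quasi-initiality of $c_0$ both to propagate vanishing (for injectivity) and to define $x_c:=\mathfrak F(f)(x)$ unambiguously (for surjectivity onto the equalizer set), so there is nothing to flag.
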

\para
Observe that given the category $\EuScript C$ and an  object $G\in {\sf Ob}(\EuScript C)$, the category
${\sf Fext}_\EuScript C(G)$ consists of quasi-initial objects.
\para
\section{Derived functors of certain functors on {\sf Ab}} \vspace{.5cm}
In this section we study the limits $\ilimit \mathfrak F$ for certain
functors $\mathfrak F\in {\sf Ab}^{{\sf Fext}_{\sf Ab}(A)}$ for
abelian groups $A$.
\para

To fix notation, let $\otimes^n:{\sf Ab}\to {\sf Ab}$, $n\geq 1$,  be the {\it $n$-th tensor power functor} $A\mapsto A^{\otimes
n}:=\underbrace{A\otimes \cdots \otimes A}_{n\ \text{terms}}$. The
symmetric group $\Sigma_n$ of degree $n$ acts naturally on $A^{\otimes n}:$
\begin{equation}\label{action}
\sigma (x_1\otimes \dots \otimes
x_n)=x_{\sigma(1)}\otimes\dots\otimes x_{\sigma(n)},\ x_i\in A,\
\sigma\in \Sigma_n.
\end{equation}
We thus have the {\it $n$-th symmetric power functor} $SP^n:{\sf Ab}\to {\sf Ab}$ with $SP^n(A)=A^{\otimes n}$ modulo the subgroup generated by the elements
$\sigma (x_1\otimes \dots \otimes
x_n)-x_{\sigma(1)}\otimes\dots\otimes x_{\sigma(n)},\ x_i\in A,\
\sigma\in \Sigma_n$. The {\it $n$-th exterior power functor}
$\Lambda^n: \sf Ab\to \sf Ab$ is
defined by $A\mapsto A^{\otimes n}$ modulo the subgroup generated by the elements $x_1\otimes \ldots\otimes x_n$ with $x_1,\,\ldots\,,\,x_n\in A$ and $x_i=x_{i+1} $ for some $i$.
The {\it $n$-th divided power functor} $\Gamma_n:\sf Ab\to \sf Ab$ is defined, for $A\in \sf Ab$,  to the $n$-th homogeneous component of  the graded  group
$\Gamma_\ast(A)$  generated by symbols $\gamma_i(x)$ of degree
$i\geq 0$ satisfying the following relations for all $x,\,y \in A$:
\begin{align*} & (i)\ \gamma_0(x) = 1 \\ & (ii)\ \gamma_1(x)=x\\ &
(iii)\
\gamma_s(x)\gamma_t(x)=\binom{s+t}{s}\gamma_{s+t}(x)\\
& (iv)\ \gamma_n(x+y)=\sum_{s+t=n}\gamma_s(x)\gamma_t(y),\ n\geq 1\\
& (v)\ \gamma_n(-x)=(-1)^n\gamma_n(x),\ n\geq 1.
\end{align*}
In particular, the canonical map $ A \to \Gamma_1(A)$ is an
isomorphism. It is known that, for a free abelian group $A$, there is a natural
isomorphism
$$
\Gamma_n(A)\simeq (A^{\otimes n})^{\Sigma_n},\ n\geq 1
$$
where the action of the symmetric group $\Sigma_n$ on $A^{\otimes
n}$ is defined as in (\ref{action}).

\para
Let $n\geq 0$ be an integer, and $T$ an endofunctor on the
category ${\sf Ab}$ of abelian groups. The doubly indexed family
$L_iT(-,\,n)$ of derived functors, in the sense of Dold-Puppe
\cite{DP}, of $T$
  are defined by
$$
L_iT(A,\,n)=\pi_iTN^{-1}P_\ast[n],\ i\geq 0,\ A\in {\sf Ab},
$$
where $P_\ast[n] \to A$ is a projective resolution of $A$ of level $n$, and
 $N^{-1}$ is  the Dold-Kan transform, which is the the inverse of the Moore normalization  functor
$$
N:  \mathcal{S}({\sf Ab}) \to \mathcal{C}h({\sf Ab})
$$
from the category of simplicial abelian groups to the category of chain complexes (see, for example,  \cite{MP}, pp.\,306, 326; or \cite{W}, Section 8.4). For any functor
$T$, we set $$L_iT(A):= L_iT(A,\,0),\ i\geq 0.$$

For abelian groups $B_1,\,\dots\,,\, B_n$, let the group
$\text{Tor}_i(B_1,\,\dots\,,\, B_n)$ denote the $i$-th homology
group of the complex $P_1\otimes \dots \otimes P_n,$ where $P_j$
is a $\mathbb Z$-flat resolution of $B_j$ for $j=1,\,\dots,\, n$.
We clearly have $$\text{Tor}_0(B_1,\,\dots,\, B_n)=B_0\otimes
\dots \otimes B_n,\ \text{Tor}_i(B_1,\,\dots,\, B_n)=0,\ i\geq
n.$$ It turns out from the Eilenberg-Zilber theorem that the
derived functors of the $n$-th tensor power can be described as
$$
L_i\otimes^n(A)=\text{Tor}_i(\underbrace{A, \cdots, A}_{n\
\text{terms}}),\ 0\leq i\leq n-1.
$$
We will use the following notation:
$$
\text{Tor}^{[n]}(A):=\text{Tor}_{n-1}(\underbrace{A, \cdots,
A}_{n\ \text{terms}}),\ n\geq 2.
$$

\para
\begin{Theorem}\label{tortheorem}
For $n\geq 2$, there is an isomorphism of abelian groups
$$
{\rm Tor}^{[n]}(A)\simeq \ilimit (F^{\otimes n}/H^{\otimes n}),\
A\in {\sf Ab},
$$
where the limit is taken over the category ${\sf Fext}_{\sf Ab}(A)$ of free
extensions $H\rightarrowtail F\twoheadrightarrow A$ in the category {\sf Ab}.
\end{Theorem}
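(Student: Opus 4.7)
My plan is to use Lemma \ref{ep} to reduce $\ilimit(F^{\otimes n}/H^{\otimes n})$ to a concrete subgroup of $F^{\otimes n}/H^{\otimes n}$ for one fixed free extension, and then to identify that subgroup with an explicit model of ${\rm Tor}^{[n]}(A)$. Since every object of ${\sf Fext}_{\sf Ab}(A)$ is quasi-initial, Lemma \ref{ep} gives a natural injection of the limit into $F^{\otimes n}/H^{\otimes n}$ whose image is the ``equalizer'': those $[x]$ with $f_1^{\otimes n}(x)\equiv f_2^{\otimes n}(x)\pmod{(H')^{\otimes n}}$ for every parallel pair of morphisms $f_1,f_2:F\to F'$ of free extensions of $A$.

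First I construct the model. Using the tensor product complex $C_\bullet=(H\to F)^{\otimes n}$ to compute ${\rm Tor}^{[n]}(A)=H_{n-1}(C_\bullet)$: here $C_{n-1}=\bigoplus_{i=1}^n H^{\otimes(i-1)}\otimes F\otimes H^{\otimes(n-i)}$ and the top differential is $d_n(h_1\otimes\cdots\otimes h_n)=\sum_{i=1}^n (-1)^{i-1}(h_1\otimes\cdots\otimes h_n)$ with the $i$-th tensorand viewed in $F$. A short sign computation on $d_{n-1}$ shows that $(c_1,\ldots,c_n)\in C_{n-1}$ is a cycle if and only if $c_i=(-1)^{i-1}c_1$ inside $F^{\otimes n}$, which forces $c_1$ into the intersection
$$
J_n\;:=\;\bigcap_{i=1}^n H^{\otimes(i-1)}\otimes F\otimes H^{\otimes(n-i)}\ \subseteq\ F^{\otimes n}.
$$
Under the projection $c\mapsto c_1$ the image of $d_n$ becomes $H^{\otimes n}\subseteq J_n$, so ${\rm Tor}^{[n]}(A)\simeq J_n/H^{\otimes n}$, sitting as a subgroup of $F^{\otimes n}/H^{\otimes n}$.

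For the inclusion $J_n/H^{\otimes n}\subseteq{\rm equalizer}$: given $x\in J_n$ and two morphisms $f_1,f_2$ as above, the difference $g=f_1-f_2$ factors through $H'$, and I expand
$$
f_1^{\otimes n}(x)-f_2^{\otimes n}(x)\;=\;\sum_{\emptyset\neq S\subseteq\{1,\ldots,n\}}(g^S\otimes f_2^{S^c})(x).
$$
For each nonempty $S$, pick any $i\in S$: since $x\in H^{\otimes(i-1)}\otimes F\otimes H^{\otimes(n-i)}$, slot $i$ of $x$ is carried into $H'$ by $g$, and every remaining slot of $x$ already lies in $H$ and so maps into $H'$ under $f_2$ or $g$. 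Hence each summand lies in $(H')^{\otimes n}$, which proves the equalizer condition.

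For the reverse inclusion, I test an equalizer class $[x]$ against the enlarged free extension $H\oplus K\rightarrowtail F\oplus K\twoheadrightarrow A$ (with $K$ free abelian), using the two morphisms $f_2(y)=(y,0)$ and $f_1(y)=(y,g(y))$ for $g:F\to K$ arbitrary. Decomposing $(F\oplus K)^{\otimes n}$ according to which slots use the $K$-factor, the component of $f_1^{\otimes n}(x)-f_2^{\otimes n}(x)$ indexed by a singleton $\{i\}$ equals $({\rm id}^{\{i\}^c}\otimes g_i)(x)\in F^{\otimes(n-1)}\otimes K$, and the equalizer condition forces it into $H^{\otimes(n-1)}\otimes K$. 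Taking $K=\mathbb Z$ and letting $g$ range over the dual basis of $F^*$ identifies every ``slice'' of $x$ at slot $i$ with an element of $H^{\otimes(n-1)}$; this exactly says $x\in H^{\otimes(i-1)}\otimes F\otimes H^{\otimes(n-i)}$ for each $i$, so $x\in J_n$. The main technical hurdle is the second paragraph, i.e.\ the combinatorial identification ${\rm Tor}^{[n]}(A)\simeq J_n/H^{\otimes n}$, which hinges on pinning down the cycle condition and the signs in the top differential of $(H\to F)^{\otimes n}$; once this is done the two inclusions are routine verifications using the tensor decomposition of $F\oplus K$.
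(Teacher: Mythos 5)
Your proof is correct, and its skeleton is the same as the paper's: identify the limit with the equalizer at a quasi-initial object via Lemma \ref{ep}, identify ${\rm Tor}^{[n]}(A)$ with $\bigl(\bigcap_{i=1}^n H^{\otimes(i-1)}\otimes F\otimes H^{\otimes(n-i)}\bigr)/H^{\otimes n}$, and pin the equalizer down by testing against morphisms into a doubled extension. The differences are in execution, at three points. For the model of ${\rm Tor}^{[n]}(A)$ the paper argues by induction via the K\"unneth isomorphism ${\rm Tor}^{[n]}(A)\simeq{\rm Tor}({\rm Tor}^{[n-1]}(A),\,A)$ together with Lemma \ref{torlemma}, whereas you compute $H_{n-1}$ of the complex $(H\to F)^{\otimes n}$ directly; both work, and your cycle analysis is the only spot requiring care with Koszul signs, though the signs do not affect the conclusion that a cycle is determined by a first component lying in the intersection. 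For the inclusion of ${\rm Tor}^{[n]}(A)$ into the limit, the paper applies left-exactness of $\ilimit$ to the natural exact sequence (\ref{seqab}) (using that the kernel subfunctor is constant), while you verify the equalizer condition directly via the multilinear expansion of $f_1^{\otimes n}-f_2^{\otimes n}$ with $g=f_1-f_2$ landing in $H'$; your version is more hands-on but dispenses with setting up the naturality of (\ref{seqab}). For the reverse inclusion, the paper uses the single pair $f_1(g)=(0,\,g)$, $f_2(g)=(g,\,g)$ into $F\oplus H\rightarrowtail F\oplus F\twoheadrightarrow A$, reading off all $n$ membership conditions at once from the components of $(F\oplus F)^{\otimes n}/(F\oplus H)^{\otimes n}$, whereas you use the family of pairs $y\mapsto(y,\,0)$, $y\mapsto(y,\,g(y))$ into $H\oplus\mathbb Z\rightarrowtail F\oplus\mathbb Z\twoheadrightarrow A$ indexed by coordinate functionals $g$ of the free group $F$; this is the same trick run one coordinate at a time, slightly less economical but equally valid.
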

\para
To proceed with the proof, we first recall the following result which is well-known.
\para
\begin{Lemma}\label{torlemma}
Let $A=F_1/H_1,\ B=F_2/H_2$, where $F_1,\,F_2$ are free abelian
groups. Then there is an isomorphism of abelian groups
$$
{\rm Tor}(A,\,B)=\frac{(H_1\otimes F_2)\cap (F_1\otimes
H_2)}{H_1\otimes H_2},
$$
where the intersection is taken in $F_1\otimes F_2$.
\end{Lemma}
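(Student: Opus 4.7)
The plan is to compute ${\rm Tor}(A,B)$ as the first homology of the total complex associated to the tensor product of the two free resolutions provided by the hypotheses, and then to identify this homology with the quotient in the statement by a direct calculation inside $F_1 \otimes F_2$.

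Concretely, I would take the length-one free resolutions $P_\bullet = (0 \to H_1 \hookrightarrow F_1)$ of $A$ and $Q_\bullet = (0 \to H_2 \hookrightarrow F_2)$ of $B$ (noting that $H_1,H_2$ are automatically free as subgroups of free abelian groups). The total complex of $P_\bullet \otimes Q_\bullet$ then has the shape
$$
0 \longrightarrow H_1 \otimes H_2 \xrightarrow{\;d_2\;} (H_1 \otimes F_2) \oplus (F_1 \otimes H_2) \xrightarrow{\;d_1\;} F_1 \otimes F_2 \longrightarrow 0,
$$
where, in the standard sign convention, $d_2(h_1 \otimes h_2) = (h_1 \otimes h_2, -h_1 \otimes h_2)$ and $d_1(x,y) = x + y$, with both summands viewed via their natural maps into $F_1 \otimes F_2$. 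Because $F_1,F_2,H_1,H_2$ are all free (hence flat), tensoring with any of them is exact, so the four groups $H_1 \otimes H_2$, $H_1 \otimes F_2$, $F_1 \otimes H_2$ and $F_1 \otimes F_2$ form a commutative square of inclusions; in particular $(H_1 \otimes F_2) \cap (F_1 \otimes H_2)$ makes sense as a subgroup of $F_1 \otimes F_2$, and $H_1 \otimes H_2$ embeds into this intersection. A routine computation then gives an isomorphism $\ker d_1 \cong (H_1 \otimes F_2) \cap (F_1 \otimes H_2)$ via $(x,-x) \mapsto x$, under which ${\rm im}\, d_2$ corresponds exactly to $H_1 \otimes H_2 \subset F_1 \otimes F_2$.

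The only non-formal ingredient is the identification $H_1({\rm Tot}(P_\bullet \otimes Q_\bullet)) \cong {\rm Tor}(A,B)$. I expect this to be the main point requiring explicit justification, but it is standard: since $Q_\bullet$ consists of flat abelian groups, tensoring the quasi-isomorphism $P_\bullet \to A$ with $Q_\bullet$ produces a quasi-isomorphism $P_\bullet \otimes Q_\bullet \to A \otimes Q_\bullet$, hence $H_1({\rm Tot}(P_\bullet \otimes Q_\bullet)) = H_1(A \otimes Q_\bullet) = {\rm Tor}(A,B)$. Combining this with the identifications above and passing to the quotient $\ker d_1 / {\rm im}\, d_2$ yields the desired formula.
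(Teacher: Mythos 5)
Your proof is correct, but it takes a different route from the paper's. The paper tensors the single resolution $0\to H_1\to F_1\to A\to 0$ with $B$ to get the four-term exact sequence
$$
0\to \operatorname{Tor}(A,B)\to H_1\otimes B\to F_1\otimes B\to A\otimes B\to 0,
$$
then identifies $H_1\otimes B\simeq (H_1\otimes F_2)/(H_1\otimes H_2)$ and $F_1\otimes B\simeq (F_1\otimes F_2)/(F_1\otimes H_2)$ by right-exactness of the tensor product and flatness, and reads off $\operatorname{Tor}(A,B)$ as the kernel of the induced map of quotients. You instead form the total complex of $P_\bullet\otimes Q_\bullet$ for resolutions of \emph{both} $A$ and $B$ and compute its first homology directly; your identifications of $\ker d_1$ with the intersection and of $\operatorname{im} d_2$ with $H_1\otimes H_2$ are right, and the quasi-isomorphism argument you give for $H_1(\operatorname{Tot}(P_\bullet\otimes Q_\bullet))\cong\operatorname{Tor}(A,B)$ (tensoring the quasi-isomorphism $P_\bullet\to A$ with the bounded complex of flat groups $Q_\bullet$) is the standard balancing of Tor and is adequately justified. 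The paper's argument is slightly more elementary, needing only the long exact sequence of $\operatorname{Tor}$ and no double complexes; yours has the advantage of working directly with the definition the paper itself adopts for the $n$-fold group $\operatorname{Tor}_i(B_1,\dots,B_n)$ as homology of $P_1\otimes\cdots\otimes P_n$, so it meshes naturally with the subsequent lemma on $\operatorname{Tor}^{[n]}(A)$, which the paper instead reaches by a K\"unneth-plus-induction reduction to the binary case.
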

\para
Indeed, the above result  follows directly from the exact sequence of abelian
groups
$$
0\to \text{Tor}(A,\,B)\to H_1\otimes B\to F_1\otimes B\to A\otimes B\to 0
$$
and isomorphisms $H_1\otimes B\simeq (H_1\otimes F_2)/(H_1\otimes
H_2),\ F_1\otimes B\simeq (F_1\otimes F_2)/(F_1\otimes H_2)$.

\para
\begin{Lemma}
Let $A=F/H$, where $F$ is a free abelian group. Then, for every
$n\geq 2$, there is an isomorphism of abelian groups
$$
{\rm Tor}^{[n]}(A)\simeq \frac{\bigcap_{i=1}^n (H^{\otimes
i-1}\otimes F\otimes H^{\otimes n-i})}{H^{\otimes n}}
$$
where the intersection is taken in $F^{\otimes n}$.
\end{Lemma}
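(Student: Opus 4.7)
The plan is to compute $\mathrm{Tor}^{[n]}(A) = \mathrm{Tor}_{n-1}(A,\ldots,A)$ directly from a short projective resolution. Since $F$ is free and $H$, being a subgroup of a free abelian group, is also free, the two-term complex $P := (H \stackrel{\iota}{\to} F)$, with $H$ in degree $1$ and $F$ in degree $0$, is a projective resolution of $A$. Consequently $\mathrm{Tor}^{[n]}(A)$ is the $(n-1)$-st homology of the total tensor product complex $P^{\otimes n}$.

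The complex $P^{\otimes n}$ has the usual cubical shape: in degree $k$ it is the direct sum of the $\binom{n}{k}$ tensor products $M_1 \otimes \cdots \otimes M_n$ in which exactly $k$ of the $M_j$'s equal $H$ (and the rest equal $F$), with differential given by the Koszul alternating sum of the partial inclusions $H \hookrightarrow F$. The top degree is $H^{\otimes n}$, degree $n-1$ is $\bigoplus_{i=1}^n M_i^*$ where $M_i^* := H^{\otimes i-1} \otimes F \otimes H^{\otimes n-i}$, and degree $n-2$ is the direct sum, over pairs $i < j$, of the tensor products whose two $F$-factors sit at positions $i$ and $j$.

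The key maneuver is to view each $M_i^*$ and each degree-$(n-2)$ summand as subgroups of $F^{\otimes n}$ via the inclusions $H \hookrightarrow F$. After this identification, every partial inclusion becomes the identity on the ambient $F^{\otimes n}$, and the differential $d_{n-1}: \bigoplus_i M_i^* \to \bigoplus_{i<j} N_{ij}$ is a signed sum of embeddings inside $F^{\otimes n}$. A direct check, examining the two contributions to each degree-$(n-2)$ summand, shows that $(x_1,\ldots,x_n) \in \bigoplus_i M_i^*$ lies in $\ker d_{n-1}$ if and only if there exists a single element $x \in F^{\otimes n}$ with $x_i = (-1)^{i-1}x$ for all $i$. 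Since each such $x_i$ is forced to lie in $M_i^*$, the signed diagonal $x \mapsto ((-1)^{i-1} x)_i$ yields an isomorphism $\ker d_{n-1} \cong \bigcap_{i=1}^n M_i^*$.

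To finish, observe that $d_n$ sends $h_1 \otimes \cdots \otimes h_n \in H^{\otimes n}$ to $\sum_{i=1}^n (-1)^{i-1}\, h_1 \otimes \cdots \otimes \iota(h_i) \otimes \cdots \otimes h_n$, and, when each summand is viewed inside $F^{\otimes n}$, all of them represent the same element $h_1 \otimes \cdots \otimes h_n$. Hence the image of $d_n$ corresponds, under the isomorphism from the previous paragraph, precisely to $H^{\otimes n} \subseteq \bigcap_i M_i^*$, and dividing yields the asserted formula. I expect the main obstacle is the sign bookkeeping needed to verify the description of $\ker d_{n-1}$; everything else is purely formal once that identification is in place.
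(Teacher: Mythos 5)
Your proof is correct, but it takes a genuinely different route from the paper. The paper proves the formula by induction on $n$: it first establishes $\mathrm{Tor}^{[n]}(A)\simeq \mathrm{Tor}(\mathrm{Tor}^{[n-1]}(A),\,A)$ via the K\"unneth formula applied to $P^{\otimes (n-1)}$ and $P$, and then feeds the inductive description of $\mathrm{Tor}^{[n-1]}(A)$ as a quotient of a subgroup of $F^{\otimes(n-1)}$ into the two-variable Lemma \ref{torlemma}. You instead compute $H_{n-1}$ of the cubical complex $P^{\otimes n}$ directly, identifying $\ker d_{n-1}$ with $\bigcap_i\bigl(H^{\otimes i-1}\otimes F\otimes H^{\otimes n-i}\bigr)$ via the signed diagonal $x\mapsto((-1)^{i-1}x)_i$ and $\operatorname{im} d_n$ with $H^{\otimes n}$. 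I checked your key claim: with the standard Koszul signs the $N_{ij}$-component of $d_{n-1}(x_1,\dots,x_n)$ is $(-1)^{j}x_i+(-1)^{i-1}x_j$ inside $F^{\otimes n}$, which vanishes for all $i<j$ exactly when $x_j=(-1)^{j-1}x_1$ for all $j$, so the description of the kernel is right (the injectivity of all the maps into $F^{\otimes n}$ follows from freeness of $H$ as a subgroup of a free abelian group, which you use implicitly and should state). Your argument buys a self-contained and completely explicit isomorphism --- which is in fact what makes the naturality assertions in the proof of Theorem \ref{tortheorem} transparent --- at the cost of the sign bookkeeping; the paper's induction is shorter given Lemma \ref{torlemma}, but silently uses that $\bigl(\bigcap_{i}X_i\bigr)\otimes H=\bigcap_i(X_i\otimes H)$ and that the subgroup $\bigcap_i X_i$ of $F^{\otimes(n-1)}$ is free so that Lemma \ref{torlemma} applies.
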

\par
\begin{proof}
Observe that
\begin{equation}\label{Kunneth}
\text{Tor}^{[n]}(A)\simeq \text{Tor}(\text{Tor}^{[n-1]}(A),\,A),\
n\geq 3.
\end{equation}
To see (\ref{Kunneth}), one can apply the K\"unneth formula to
the tensor product of the chain complexes $P\otimes \dots \otimes
P\ (n-1\ \text{times})$ and $P$, where $P$ is a projective
resolution of $A$. The Lemma follows by inductive argument and
Lemma \ref{torlemma}.
\end{proof}

\para\noindent{\it Proof of Theorem \ref{tortheorem}.} Given $A=F/H,$
where $F$ is a free abelian group, consider the following exact
sequence of abelian groups:
\begin{equation}\label{seqab}
0\to \text{Tor}^{[n]}(A)\to F^{\otimes n}/H^{\otimes n}\to
F^{\otimes n}/\bigcap_{i=1}^n (H^{\otimes i-1}\otimes F\otimes
H^{\otimes n-i})\to 0.
\end{equation}
The sequence (\ref{seqab}) is natural in the following sense: any
morphism in  ${\sf Fext}_{\sf Ab}(A)$, say \linebreak $f:(F_1,\,\pi_1) \to (F_2,\,\pi_2)$
(with $H_1=\ker(\pi_1),\ H_2=\ker(\pi_2)$) implies the commutative
diagram
$$
\xyma{0 \ar@{->}[r]  & \text{Tor}^{[n]}(A) \ar@{->}[r] \ar@{=}[d]
& F_1^{\otimes n}/H_1^{\otimes n} \ar@{->}[r] \ar@{->}[d] &
F_1^{\otimes n}/\bigcap_{i=1}^n( H_1^{\otimes i-1}\otimes
F_1\otimes
H_1^{\otimes n-i}) \ar@{->}[d]\\
0 \ar@{->}[r] & \text{Tor}^{[n]}(A) \ar@{->}[r] & F_2^{\otimes
n}/H_2^{\otimes n} \ar@{->}[r] & F_2^{\otimes n}/\bigcap_{i=1}^n(
H_2^{\otimes i-1}\otimes F_2\otimes H_2^{\otimes n-i}) }
$$
Since the inverse limit functor is left exact in ${\sf Ab}^{{\sf
Fext}_{\sf Ab}(A)}$, we obtain a natural monomorphism
\begin{equation}\label{embed}
\text{Tor}^{[n]}(A)\hookrightarrow \ilimit F^{\otimes
n}/H^{\otimes n},\ n\geq 2.
\end{equation}
Given a free presentation $A=F/H$ in {\sf Ab}, consider the two morphisms in ${\sf
Fext}_{\sf Ab}(A)$
\begin{equation}\label{dofmap}
\xyma{0 \ar@{->}[r] & H \ar@{->}[r] \ar@{->}[d] & F \ar@{->}[r]
\ar@{->}[d]^{f_1,\,f_2} & A\ar@{->}[r] \ar@{=}[d]
& 0\\
0 \ar@{->}[r] & F\oplus H \ar@{->}[r] & F\oplus F \ar@{->}[r] &
A\ar@{->}[r] & 0} \end{equation} given by setting:
\begin{align*}
& f_1: g\mapsto (0,\,g),\ g\in F,\\
& f_2: g\mapsto (g,\,g),\ g\in F.
\end{align*}
Let $\alpha\in F^{\otimes n}/H^{\otimes n}$ be an element which
belongs to the equalizer of the maps
$$
f_1^*,\,f_2^*: F^{\otimes n}/H^{\otimes n}\to \frac{(F\oplus
F)^{\otimes n}}{(F\oplus H)^{\otimes n}}
$$
 induced by $f_1,\,f_2$ respectively. Express  $\alpha$ as a coset
$$
\alpha=(\sum_ig_1^{i}\otimes \dots \otimes g_n^i)+H^{\otimes n},\
g_j^i\in F.
$$
Identifying
$
\frac{(F\oplus F)^{\otimes n}}{(F\oplus H)^{\otimes
n}}$ with $\bigoplus_{(i_1,\,\dots,\, i_n)\in
\{0,\,1\}^n}\frac{F^{\otimes n}}{ C_{i_1}\otimes \dots\otimes
C_{i_n}}
$
where $C_0=F$ and $ C_1=H$, we can describe $f_i^*(\alpha),\ i=1,\,2$, as
\begin{align*}
& f_1^*(\alpha)=(0,\,\dots\,,\,0,\,\sum_ig_1^{i}\otimes \dots
\otimes
g_n^i)\\
& f_2^*(\alpha)=(\sum_ig_1^{i}\otimes \dots \otimes
g_n^i,\,\dots\,,\,\sum_ig_1^{i}\otimes \dots \otimes g_n^i)
\end{align*}
Since $\alpha$ lies in the equalizer of $f_1^*$ and $f_2^*$, we
conclude that
$$
\sum_ig_1^{i}\otimes \dots \otimes g_n^i\in \bigcap_{i=1}^n
(H^{\otimes i-1}\otimes F\otimes H^{\otimes n-i}).
$$
The category ${\sf Fext}_{\sf Ab}(A)$ clearly consists of quasi-initial
objects; hence  Lemma \ref{ep} implies that the natural map
(\ref{embed}) is an isomorphism and the theorem is proved.\ $\Box$
\para

\begin{Theorem}
For every abelian group $A$ and integer $n\geq 2,$  there are natural isomorphisms
\begin{align}
& L_{n-1}SP^n(A)\simeq \ilimit \Lambda^n(F)/\Lambda^n(H)\\
& L_{n-1}\Lambda^n(A)\simeq \ilimit \Gamma_n(F)/\Gamma_n(H)
\end{align}
where the limits are taken over the category ${\sf Fext_{Ab}}(A)$
of free extensions $H\rightarrowtail F\twoheadrightarrow A$.
\end{Theorem}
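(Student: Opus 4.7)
My approach mirrors the proof of Theorem \ref{tortheorem}. Fix a free presentation $0 \to H \to F \to A \to 0$ in ${\sf Ab}$. The plan is to identify $L_{n-1}SP^n(A)$ and $L_{n-1}\Lambda^n(A)$ as natural subgroups of $\Lambda^n F/\Lambda^n H$ and $\Gamma_n F/\Gamma_n H$ respectively via a Koszul-type description of these derived functors, and then invoke Lemma \ref{ep} (noting that every object of ${\sf Fext}_{\sf Ab}(A)$ is quasi-initial) to show these subgroups exhaust the inverse limits.

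For the first isomorphism I would use the Dold--Puppe/Koszul computation of $L_*SP^n$ on a 2-term resolution: the derived functors are the homology of the chain complex
$$\Lambda^n H \to \Lambda^{n-1}H \otimes F \to \cdots \to \Lambda^k H \otimes SP^{n-k}F \to \cdots \to SP^n F,$$
with Koszul differential induced by $H \hookrightarrow F$. Hence
$$L_{n-1}SP^n(A) \simeq \ker\bigl(F \otimes \Lambda^{n-1}H \to SP^2 F \otimes \Lambda^{n-2}H\bigr)\big/\mathrm{im}\bigl(\Lambda^n H \to F \otimes \Lambda^{n-1}H\bigr).$$
Via the antisymmetrization embedding $\Lambda^k F \hookrightarrow F^{\otimes k}$, which is injective for free $F$ and has image the sign-isotypic $\Sigma_k$-summand, this subquotient is identified with the antisymmetric part of the intersection $\bigcap_{i=1}^n H^{\otimes i-1} \otimes F \otimes H^{\otimes n-i}$ appearing in Theorem \ref{tortheorem}, modulo $\Lambda^n H$. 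This yields a natural short exact sequence
$$0 \to L_{n-1}SP^n(A) \to \Lambda^n F/\Lambda^n H \to Q \to 0,$$
functorial in morphisms of ${\sf Fext}_{\sf Ab}(A)$, so by left exactness of $\ilimit$ we obtain an embedding $L_{n-1}SP^n(A) \hookrightarrow \ilimit \Lambda^n F/\Lambda^n H$.

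For the reverse inclusion I would invoke Lemma \ref{ep} using the test pair $f_1, f_2$ from diagram (\ref{dofmap}), $f_1(g) = (0,g)$ and $f_2(g) = (g,g)$. Under the splittings $\Lambda^n(F \oplus F) = \bigoplus_{p+q=n}\Lambda^p F \otimes \Lambda^q F$ and $\Lambda^n(F \oplus F)/\Lambda^n(F \oplus H) = \bigoplus_{p+q=n}\Lambda^p F \otimes (\Lambda^q F/\Lambda^q H)$, the equalizer condition $f_1^*(\alpha) = f_2^*(\alpha)$ reduces to requiring, for each $1 \leq p \leq n-1$, that the $(p,n-p)$-component of the comultiplication $\Delta(\alpha) \in \Lambda^p F \otimes \Lambda^{n-p}F$ lies in $\Lambda^p F \otimes \Lambda^{n-p}H$. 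This precisely characterizes the image of $L_{n-1}SP^n(A)$ inside $\Lambda^n F/\Lambda^n H$ from the previous step. The second isomorphism is proved identically, using the Koszul complex $\Gamma_n H \to \Gamma_{n-1}H \otimes F \to \cdots \to \Lambda^n F$ that computes $L_*\Lambda^n(A)$, the symmetrization embedding $\Gamma_n F \hookrightarrow F^{\otimes n}$ identifying $\Gamma_n F$ with the $\Sigma_n$-invariants for free $F$, and the analogous splittings $\Gamma_n(F \oplus F) = \bigoplus_{p+q=n}\Gamma_p F \otimes \Gamma_q F$ and $\Gamma_n(F \oplus H) = \bigoplus_{p+q=n}\Gamma_p F \otimes \Gamma_q H$.

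The main obstacle will be the identification of $L_{n-1}SP^n(A)$ (respectively $L_{n-1}\Lambda^n(A)$) as a subgroup of $\Lambda^n F/\Lambda^n H$ (resp.\ $\Gamma_n F/\Gamma_n H$), i.e., matching the equalizer condition with the Koszul description of the derived functor. Concretely, this amounts to showing that an antisymmetric (resp.\ symmetric) tensor $\omega \in F^{\otimes n}$ lies in $\bigcap_i H^{\otimes i-1} \otimes F \otimes H^{\otimes n-i}$ if and only if each non-trivial iterated partial coproduct of $\omega$ takes values in a subgroup with at least one $H$-factor. This can be checked by choosing compatible bases for $H \subset F$ via Smith normal form, which reduces the verification to the diagonal case.
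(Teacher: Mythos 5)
Your strategy is correct in outline, but the second half takes a genuinely different route from the paper. Both proofs begin the same way: the K\"ock Koszul complexes give the natural exact sequences $0\to L_{n-1}SP^n(A)\to\Lambda^n(F)/\Lambda^n(H)\to(\Lambda^{n-1}(F)/\Lambda^{n-1}(H))\otimes F$ and $0\to L_{n-1}\Lambda^n(A)\to\Gamma_n(F)/\Gamma_n(H)\to(\Gamma_{n-1}(F)/\Gamma_{n-1}(H))\otimes F$ (the paper obtains them by comparing the Koszul complex of $H\hookrightarrow F$ with the acyclic one of $\mathrm{id}_F$, which is the cleanest way to get the naturality and the identification of the image of $L_{n-1}SP^n(A)$ as the kernel of the first differential of the quotient complex), and left exactness of $\ilimit$ yields the embedding. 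For surjectivity, however, the paper never invokes Lemma \ref{ep}: it proves $\ilimit\bigl((\Lambda^{n-1}(F)/\Lambda^{n-1}(H))\otimes F^{\otimes k}\bigr)=0$ and its $\Gamma$-analogue by induction on $n$, with both the base case and the inductive step resting on the coproduct-vanishing Lemma \ref{lemmaem}, and then reads off the isomorphism from the three-term limit sequence. Your route --- transplanting the equalizer argument of Theorem \ref{tortheorem} with the test pair $f_1,f_2:F\to F\oplus F$ and the decompositions $\Lambda^n(F\oplus F)\cong\bigoplus_{p+q=n}\Lambda^pF\otimes\Lambda^qF$, $\Gamma_n(F\oplus F)\cong\bigoplus_{p+q=n}\Gamma_pF\otimes\Gamma_qF$ --- does work, but it is easier than you make it. You only need the inclusion of the equalizer of $f_1^*,f_2^*$ into the image of $L_{n-1}SP^n(A)$, since the reverse containment is already forced by $L_{n-1}SP^n(A)\hookrightarrow\ilimit\hookrightarrow\mathrm{Eq}(f_1^*,f_2^*)$ (Lemma \ref{ep}); and that one inclusion follows from the single component $p=1$ of your equalizer condition, which says that the $(1,n-1)$-part of the comultiplication of $\omega$ lies in $F\otimes\Lambda^{n-1}(H)$ --- by the symmetry of the comultiplication this is exactly membership in the kernel of the first differential of the quotient Koszul complex, i.e.\ in the image of $L_{n-1}SP^n(A)$. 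So the full ``matching'' you flag as the main obstacle, and in particular the Smith-normal-form reduction, is unnecessary; that is fortunate, because stacked bases for $H\subseteq F$ need not exist when $F$ has infinite rank (e.g.\ $F/H\cong\mathbb Q$), so that step as stated would require a further filtered-colimit reduction to the finitely generated case. In sum: the paper's induction is uniform and avoids computing exterior and divided powers of direct sums, while your argument avoids the induction and identifies the limit concretely as an equalizer inside a single presentation, exactly in the spirit of Theorem \ref{tortheorem}; both are valid.
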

\begin{proof}
Given a free extension $$ 0\to H\buildrel{f}\over\to F\to A\to 0
$$
the Koszul complexes
\begin{equation}\label{kos1}
 0\to \Lambda^n(H)\buildrel{\kappa_n}\over\to
\Lambda^{n-1}(H)\otimes F\buildrel{\kappa_{n-1}}\over\to \dots
\buildrel{\kappa_2}\over\to H\otimes
SP^{n-1}(F)\buildrel{\kappa_1}\over\to SP^n(F) \end{equation}
and
\begin{equation}
 0\to \Gamma_n(H)\buildrel{\kappa^n}\over\to
\Gamma_{n-1}(H)\otimes F\buildrel{\kappa^{n-1}}\over\to \dots
\buildrel{\kappa^2}\over\to H\otimes
\Lambda^{n-1}(F)\buildrel{\kappa^1}\over\to
\Lambda^n(F)\label{kos2}
\end{equation}
represent models of the objects $LSP^n(A)$ and $L\Lambda^n(A)$ in
the derived category (see \cite{Kock}, Proposition 2.4 and Remark 2.7). In these complexes, the maps
\begin{align*}
& \kappa_{k+1}: \Lambda^{k+1}(H)\otimes SP^{n-k-1}(F)\to
\Lambda^k(H)\otimes SP^{n-k}(F),\ k=0,\dots, n-1\\
& \kappa^{k+1}: \Gamma_{k+1}(H)\otimes \Lambda^{n-k-1}(F)\to
\Gamma_k(H)\otimes \Lambda^{n-k}(F),\ k=0,\,\dots\,,\, n-1
\end{align*}
are defined by setting:
\begin{multline*}
\kappa_{k+1}: p_1\wedge \dots\wedge p_{k+1}\otimes q_{k+2}\dots
q_n\mapsto \\ \sum_{i=1}^{k+1}(-1)^{k+1-i}p_1\wedge \dots \wedge
\hat p_i\wedge \dots \wedge p_{k+1}\otimes f(p_i)\,q_{k+2}\dots
q_n\\
p_1,\,\dots\,, \,p_{k+1}\in H,\ q_{k+2},\,\dots\,, \,q_n\in F.
\end{multline*}
and
\begin{multline*}
\kappa^{k+1}: \gamma_{r_1}(p_1)\ldots  \gamma_{r_k}(p_k) \otimes
q_1\wedge\ldots \wedge q_{n-k-1}
\mapsto \\
 \sum_{j=1}^k  \gamma_{r_1}(p_1)\ldots \gamma_{r_{j-1}}(p_j)\ldots \gamma_{r_k}(p_k)
\otimes f(p_j) \wedge q_1\wedge \ldots \wedge q_{n-k-1},\
p_1,\,\dots\,,\, p_k\in H,\ q_1,\,\dots\,, \,q_{n-k-1}\in F
\end{multline*}
In particular, the homology groups of complexes (\ref{kos1}) and
(\ref{kos2}) are isomorphic to the derived functors $L_iSP^n(A)$
and $L_i\Lambda^n(A)$ respectively. If $f: H\to F$ is the identity
map and $A=0$, the complexes (\ref{kos1}) and (\ref{kos2}) are
acyclic complexes. The commutative diagram
$$
\xyma{H \ar@{^{(}->}[r]^f \ar@{^{(}->}[d]^f & F\ar@{=}[d]\\
F \ar@{=}[r] & F}
$$
implies the following diagrams with exact columns:
\begin{equation}\label{d1}
\xyma{\Lambda^n(H) \ar@{^{(}->}[d] \ar@{^{(}->}[r]^{\kappa_n\ \ \
\ \ } & \Lambda^{n-1}(H)\otimes F \ar@{^{(}->}[d] \ar@{->}[r]^{\ \
\ \ \ \ \kappa_{n-1}} & \dots \ar@{->}[r]^{\kappa_2\ \ \ \ \ \ } &
H\otimes SP^{n-1}(F)\ar@{^{(}->}[d]
\ar@{->}[r]^{\ \ \ \kappa_1} & SP^n(F)\ar@{=}[d]\\
\Lambda^n(F) \ar@{->>}[d] \ar@{^{(}->}[r] &
\Lambda^{n-1}(F)\otimes F\ar@{->}[r] \ar@{->>}[d] &
\dots \ar@{->}[r] & F\otimes SP^{n-1}(F) \ar@{->>}[d] \ar@{->}[r] & SP^n(F)\\
\frac{\Lambda^n(F)}{\Lambda^n(H)} \ar@{->}[r] &
\frac{\Lambda^{n-1}(F)}{\Lambda^{n-1}(H)}\otimes F \ar@{->}[r]
\ar@{->}[r] & \dots \ar@{->}[r] & A\otimes SP^{n-1}(F),}
\end{equation}
\begin{equation}\label{d2}
\xyma{\Gamma_n(H) \ar@{^{(}->}[d] \ar@{^{(}->}[r]^{\kappa^n\ \ \ \
\ } & \Gamma_{n-1}(H)\otimes F \ar@{^{(}->}[d] \ar@{->}[r]^{\ \ \
\ \ \ \kappa^{n-1}} & \dots \ar@{->}[r]^{\kappa^2\ \ \ \ \ \ } &
H\otimes \Lambda^{n-1}(F)\ar@{^{(}->}[d]
\ar@{->}[r]^{\ \ \ \kappa^1} & \Lambda^n(F)\ar@{=}[d]\\
\Gamma_n(F) \ar@{->>}[d] \ar@{^{(}->}[r] & \Gamma_{n-1}(F)\otimes
F\ar@{->}[r] \ar@{->>}[d] &
\dots \ar@{->}[r] & F\otimes \Lambda^{n-1}(F) \ar@{->>}[d] \ar@{->}[r] & \Lambda^n(F)\\
\frac{\Gamma_n(F)}{\Gamma_n(H)} \ar@{->}[r] &
\frac{\Gamma_{n-1}(F)}{\Gamma_{n-1}(H)}\otimes F \ar@{->}[r]
\ar@{->}[r] & \dots \ar@{->}[r] & A\otimes \Lambda^{n-1}(F).}
\end{equation}
Since the middle horizontal sequences in the diagrams (\ref{d1})
and (\ref{d2}) are exact, we obtain the following exact sequences:
\begin{align}
& 0\to L_{n-1}SP^n(A) \to \frac{\Lambda^n(F)}{\Lambda^n(H)}\to
\frac{\Lambda^{n-1}(F)}{\Lambda^{n-1}(H)}\otimes F\label{zz1}\\
& 0\to L_{n-1}\Lambda^n(A)\to \frac{\Gamma_n(F)}{\Gamma_n(H)}\to
\frac{\Gamma_{n-1}(F)}{\Gamma_{n-1}(H)}\otimes F\label{zz2}
\end{align}
Since the inverse limit functor is left exact, we obtain the
following natural sequences:
\begin{align}
& 0\to L_{n-1}SP^n(A) \to \ilimit
\frac{\Lambda^n(F)}{\Lambda^n(H)}\to
\ilimit\frac{\Lambda^{n-1}(F)}{\Lambda^{n-1}(H)}\otimes F\label{ss1}\\
& 0\to L_{n-1}\Lambda^n(A)\to
\ilimit\frac{\Gamma_n(F)}{\Gamma_n(H)}\to \ilimit
\frac{\Gamma_{n-1}(F)}{\Gamma_{n-1}(H)}\otimes F\label{ss2}
\end{align}
where the limits are taken, as usual, over the category of extensions
$H\rightarrowtail F\twoheadrightarrow A$. We claim that for $n\geq
2,$ and every $k\geq 1$,
\begin{equation}\label{es3}
\ilimit \left(\frac{\Lambda^{n-1}(F)}{\Lambda^{n-1}(H)}\otimes
F^{\otimes k}\right)=\ilimit
\left(\frac{\Gamma_{n-1}(F)}{\Gamma_{n-1}(H)}\otimes F^{\otimes
k}\right)=0.
\end{equation}
For $n=2,$ the functor
$$
\{H\rightarrowtail F\twoheadrightarrow A\}\mapsto A\otimes
F^{\otimes k}
$$
satisfies the conditions of Lemma \ref{lemmaem} and the assertion
follows. Now assume that (\ref{es3}) is proved for a fixed $n\geq
1$. Consider the tensor products of sequences (\ref{zz1}) (for
$n+1$) and (\ref{zz2}) with $F^{\otimes k}$:
\begin{align*}
& 0\to L_{n}SP^{n+1}(A)\otimes F^{\otimes k}\to
\frac{\Lambda^{n+1}(F)}{\Lambda^{n+1}(H)}\otimes F^{\otimes k}\to
\frac{\Lambda^{n}(F)}{\Lambda^{n}(H)}\otimes F^{\otimes k+1}\\
& 0\to L_{n}\Lambda^{n+1}(A)\otimes F^{\otimes k}\to
\frac{\Gamma_{n+1}(F)}{\Gamma_{n+1}(H)}\otimes F^{\otimes k}\to
\frac{\Gamma_{n}(F)}{\Gamma_{n}(H)}\otimes F^{\otimes k+1}
\end{align*}
By induction,
$$
\ilimit \left(\frac{\Lambda^{n}(F)}{\Lambda^{n}(H)}\otimes
F^{\otimes k+1}\right)=\ilimit
\left(\frac{\Gamma_{n}(F)}{\Gamma_{n}(H)}\otimes F^{\otimes
k+1}\right)=0
$$
and the functors
\begin{align*}
& \{H\rightarrowtail F\twoheadrightarrow A\}\mapsto
L_nSP^{n+1}(A)\otimes F^{\otimes k}\\
& \{H\rightarrowtail F\twoheadrightarrow A\}\mapsto
L_n\Lambda^{n+1}(A)\otimes F^{\otimes k}
\end{align*}
satisfy the conditions of Lemma \ref{lemmaem}. Hence (\ref{es3})
follows. The statement of the theorem now follows from sequences
(\ref{ss1}) and (\ref{ss2}).
\end{proof}
 \vspace{.5cm}
\section{Higher limits}
\vspace{.5cm}

Let $\sf C$ be a small category. The first derived functor
$$
 \ilimit^1:  {\sf Gr}^{\sf C}\to \text{pointed sets}
$$
can be defined via cosimplicial replacement in the category ${\sf
Gr}^{\sf C}$ described in \cite{BK}. Given $\mathfrak F\in {\sf
Gr}^{\sf C}$, define a cosimplicial replacement
${\prod}^*\mathfrak F$, a cosimplicial group, with
$$
{\prod}^n\mathfrak F=\prod_{u\in I_n}\mathfrak F(i_0),\
u=\{i_0\buildrel{\alpha_1}\over\leftarrow \dots
\buildrel{\alpha_n}\over\leftarrow i_n\}
$$
and coface and codegeneracy maps induced by
\begin{align*}
& d^0: \mathfrak F(i_1)\buildrel{\mathfrak F(\alpha_1)}\over\to
\mathfrak F(i_0),\\
& d^j: \mathfrak F(i_0)\buildrel{id}\over\to \mathfrak F(i_0),\
0<j\leq n,\\
& s^j: \mathfrak F(i_0)\buildrel{id}\over\to \mathfrak F(i_0),\
0\leq j\leq n.
\end{align*}
One can check that there is a natural isomorphism (see \cite{BK})
$$
\ilimit \mathfrak F=\pi^0{\prod}^*\mathfrak F.$$ The derived
functor of the inverse limit can be defined as
$$
\ilimit^1 \mathfrak F=\pi^1{\prod}^*\mathfrak F\in \text{pointing
sets}. $$We then have the following:
\para
\begin{prop}\label{lemmainverse}
Let $\sf 1$ be an identity functor in ${\sf Gr}^{\sf C}$ and let
$$
\sf 1\to \mathfrak F_1\to \mathfrak F_2\to \mathfrak F_3\to \sf 1
$$
be a short exact sequence in ${\sf Gr}^{\sf C}$. There is a
natural long exact sequence of groups and pointed spaces:
\begin{equation}\label{a09}
1\to \ilimit \mathfrak F_1\to \ilimit \mathfrak F_2\to \ilimit
\mathfrak F_3\to \ilimit^1 \mathfrak F_1\to \ilimit^1 \mathfrak
F_2\to \ilimit^1 \mathfrak F_3.
\end{equation}
\end{prop}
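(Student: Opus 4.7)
The plan is to push the given short exact sequence down to cosimplicial groups via the replacement functor $\prod^*$ and then invoke the standard long exact sequence in non-abelian cohomology of cosimplicial groups.

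The first step is to verify that $\prod^*$ is degree-wise exact on ${\sf Gr}^{\sf C}$. In cosimplicial degree $n$ one has $\prod^n \mathfrak F_i = \prod_{u \in I_n} \mathfrak F_i(i_0)$, an (unrestricted) direct product of groups; direct products in $\sf Gr$ preserve both kernels and surjections, so the level-wise sequence
\[
1 \longrightarrow \prod{}^{*} \mathfrak F_1 \longrightarrow \prod{}^{*} \mathfrak F_2 \longrightarrow \prod{}^{*} \mathfrak F_3 \longrightarrow 1
\]
is short exact. The coface and codegeneracy maps are induced by the $\mathfrak F_i$ and therefore commute with the level-wise morphisms, so this is a genuine short exact sequence of cosimplicial groups.

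The second step is to invoke the classical six-term exact sequence attached to a short exact sequence of cosimplicial groups (Bousfield--Kan \cite{BK}): for any short exact sequence $\sf 1 \to X^* \to Y^* \to Z^* \to \sf 1$ of cosimplicial groups there is a natural exact sequence of pointed sets
\[
1 \to \pi^0 X^* \to \pi^0 Y^* \to \pi^0 Z^* \to \pi^1 X^* \to \pi^1 Y^* \to \pi^1 Z^*,
\]
whose first three terms are groups. The connecting map is the usual one: lift a compatible family $\bar z \in \pi^0 Z^*$ to some $y \in Y^0$, and take the class of the cocycle $d^0 y \cdot (d^1 y)^{-1} \in X^1$. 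Substituting the identifications $\ilimit \mathfrak F_i = \pi^0 \prod{}^{*} \mathfrak F_i$ and $\ilimit^1 \mathfrak F_i = \pi^1 \prod{}^{*} \mathfrak F_i$ recorded in the paragraph preceding the proposition, this is precisely (\ref{a09}).

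The main obstacle is the verification of exactness at the three non-abelian (pointed-set) positions, which must be carried out in terms of cocycles modulo the twisted $\pi^0$-action. Exactness at $\pi^0 Z^*$ follows by a standard lift-and-difference argument using the degree-zero surjectivity. Exactness at $\pi^1 X^*$ requires that a cocycle in $X^1$ which becomes a coboundary in $Y^1$ already admit a degree-zero primitive $y \in Y^0$, and this uses both that $X \to Y$ is a kernel level-wise and the degree-zero surjectivity of $Y \to Z$. Exactness at $\pi^1 Y^*$ uses the degree-one surjectivity of $Y^1 \to Z^1$ to adjust a $Y$-cocycle trivialized in $Z$ so that it lands in the image of $X^1$. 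The non-abelian nature of $\pi^1$ of a cosimplicial group is precisely what forces the sequence to terminate at $\ilimit^1 \mathfrak F_3$, in agreement with the stated form of (\ref{a09}).
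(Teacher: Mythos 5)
Your argument is correct and is exactly the route the paper intends: the paper states this proposition without proof, having just set up the cosimplicial replacement $\prod^*\mathfrak F$ and the identifications $\ilimit\mathfrak F=\pi^0\prod^*\mathfrak F$, $\ilimit^1\mathfrak F=\pi^1\prod^*\mathfrak F$, so the statement is meant to follow from the levelwise exactness of $\prod^*$ (products of groups) together with the standard six-term exact sequence of pointed sets associated to a short exact sequence of cosimplicial groups in \cite{BK}. Your verification of degreewise exactness and your description of the connecting map and of exactness at the pointed-set positions supply precisely the details the paper leaves implicit.
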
\para
In the case of the category ${\sf Ab}^{\sf C}$, the functor
$\ilimit^1$ has values in $\sf Ab$ and the sequence (\ref{a09}) is
a long exact sequence of abelian groups. In this case there is a
cochain complex of abelian groups defined by
$$
{\prod}^\bullet \mathfrak F: {\prod}^0\mathfrak
F\buildrel{\delta^0}\over\rightarrow {\prod}^1\mathfrak
F\buildrel{\delta^1}\over\rightarrow\dots
$$
with
\begin{multline*}
\delta^n(a^n)\{i_0\buildrel{\alpha_1}\over\leftarrow \dots
\buildrel{\alpha_{n+1}}\over\leftarrow i_{n+1}\}=\\ \mathfrak
F(i_0\buildrel{\alpha_1}\over\leftarrow
i_1)a^n\{i_1\buildrel{\alpha_1}\over\leftarrow \dots
\buildrel{\alpha_{n+1}}\over\leftarrow
i_{n+1}\}+\sum_{j=1}^{n+1}(-1)^ja^n\{i_0\buildrel{\alpha_1}\over\leftarrow
\dots \leftarrow \hat i_j\leftarrow\dots
\buildrel{\alpha_{n+1}}\over\leftarrow i_{n+1}\},\\ a^n\in
{\prod}^n\mathfrak F,
\end{multline*}
such that the derived functors of the inverse limit are the
cohomology groups:
$$
\ilimit^n \mathfrak F=H^n({\prod}^\bullet \mathfrak F),\ n\geq 0
$$
(see \cite{Jensen}, Theorem 4.1). Clearly,
$$
\ilimit \mathfrak F=\ilimit^0 \mathfrak F=\ker(\delta^0).
$$

The question of vanishing of higher limits of functors defined on
small categories in general reduces to the computation of local
cohomology of nerves of these categories. We give a simple
condition for the vanishing of $\ilimit^1$.

\para
\begin{prop}
Let $\sf C$ be a category with a quasi-initial object and
${\mathfrak F} : \sf C \longrightarrow \sf Ab$ a functor. Suppose
that every pair of morphisms in $\sf C$ has a coequalizer, i.e.,
for every pair of morphisms $\varepsilon_1,\,\varepsilon_2: I_1\to
I_0,\ I_1,\,I_0\in {\sf Ob}(\sf C)$ there is a morphism $\varepsilon:
I_0\to I(I_0,\,I_1)$ in $\sf C$ such that the following diagram is
commutative
\begin{equation*}
\xymatrix{I(I_0,\,I_1)& &
I_0\ar[ll]_{\varepsilon}&I_1\ar@/_30pt/[lll]_{\varepsilon\circ\varepsilon_1}^{\;}="b"\ar@/_10pt/[l]^{\varepsilon_1}^<(1){\;}="c"
\ar@/^30pt/[lll]^{\varepsilon\circ\varepsilon_2}_{\;}="d"\ar@/^10pt/[l]_{\varepsilon_2}},
\end{equation*}
i.e.,
$\varepsilon\circ\varepsilon_1=\varepsilon\circ\varepsilon_2$ and
the induced map $\mathfrak F(\varepsilon): \mathfrak F(I_0)\to
\mathfrak F(I(I_0,I_1))$ is injective. Then $\ilimit^1\mathfrak
F=0$.
\end{prop}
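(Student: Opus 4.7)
I would work with the Bousfield--Kan cochain complex $\prod^\bullet \mathfrak F$ recalled in the excerpt, which identifies $\ilimit^1 \mathfrak F$ with $H^1(\prod^\bullet \mathfrak F)$. Accordingly, the goal becomes: every $1$-cocycle $a$, i.e.\ a family $a\{i_0 \xleftarrow{\alpha} i_1\} \in \mathfrak F(i_0)$ indexed by the arrows of $\sf C$ and satisfying the usual cocycle relation on composable pairs, is the coboundary of some $b \in \prod^0 \mathfrak F = \prod_i \mathfrak F(i)$.

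My candidate for $b$ would be built from the quasi-initial object $c_0$ by choosing, for every object $i$, some morphism $\psi_i : c_0 \to i$ and setting $b(i) := -a\{i \xleftarrow{\psi_i} c_0\}$. Applying the cocycle relation to the $2$-chain $i \xleftarrow{\alpha} j \xleftarrow{\psi_j} c_0$ gives
\begin{equation*}
a\{i \xleftarrow{\alpha} j\} = a\{i \xleftarrow{\alpha \circ \psi_j} c_0\} - \mathfrak F(\alpha)\,a\{j \xleftarrow{\psi_j} c_0\},
\end{equation*}
and comparing this with the explicit formula for $\delta^0 b$ reduces the required identity $\delta^0 b = a$ to the single assertion that the value $a\{i \xleftarrow{\varphi} c_0\}$ is independent of the choice of morphism $\varphi : c_0 \to i$.

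This independence is the only nontrivial step, and it is exactly where the coequalizer hypothesis enters. Given two morphisms $\varphi_1, \varphi_2 : c_0 \to i$, the assumption supplies $\varepsilon : i \to I$ with $\varepsilon \varphi_1 = \varepsilon \varphi_2$ and $\mathfrak F(\varepsilon)$ injective. Evaluating the cocycle relation on the two $2$-chains $I \xleftarrow{\varepsilon} i \xleftarrow{\varphi_k} c_0$ for $k=1,2$ produces
\begin{equation*}
a\{I \xleftarrow{\varepsilon \varphi_k} c_0\} = \mathfrak F(\varepsilon)\,a\{i \xleftarrow{\varphi_k} c_0\} + a\{I \xleftarrow{\varepsilon} i\},
\end{equation*}
and since the two left-hand sides coincide, subtracting shows that $a\{i \xleftarrow{\varphi_1} c_0\} - a\{i \xleftarrow{\varphi_2} c_0\}$ lies in the kernel of $\mathfrak F(\varepsilon)$, hence vanishes. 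I expect this single appeal to the injectivity hypothesis to be the heart of the argument; everything else is bookkeeping with the Bousfield--Kan coboundary formula.
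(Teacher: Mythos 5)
Your proposal is correct and follows essentially the same route as the paper: reduce to showing a $1$-cocycle is a coboundary, define the $0$-cochain via a choice of morphism from the quasi-initial object, and use the coequalizer together with the injectivity of $\mathfrak F(\varepsilon)$ to show the cocycle's value on a morphism out of $c_0$ is independent of that choice. The only cosmetic difference is that the paper establishes this independence for arbitrary parallel pairs rather than only those with source $c_0$, and uses the opposite sign convention for the $0$-cochain.
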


\noindent{\it Proof.} Let $a^1\in {\prod}^1\mathfrak
F=\prod_{i_0\leftarrow i_1}^1\mathfrak F$ be a 1-cocycle, i.e.,
\begin{equation}\label{cocc}
\delta^1a^1(i_0\leftarrow i_1\leftarrow i_2)=\mathfrak
F(i_0\leftarrow i_1)a^1(i_1\leftarrow i_2)+a^1(i_0\leftarrow
i_1)-a^1(i_0\leftarrow i_2)=0
\end{equation}
for every diagram $i_0\leftarrow i_1\leftarrow i_2$. Given two
morphisms $\varepsilon_1, \,\varepsilon_2: i_1\to i_0$, consider a
morphism $\varepsilon: i_0\to I(i_0,\,i_1),$ such that $\mathfrak
F(\varepsilon): \mathfrak F(i_0) \to \mathfrak F(I(i_0,\,i_1))$ is a
monomorphism of abelian groups and
$\varepsilon\circ\varepsilon_1=\varepsilon\circ\varepsilon_2$. The
cocycle condition (\ref{cocc}) implies that
\begin{align*}
\mathfrak
F(\varepsilon)a^1(i_0\buildrel{\varepsilon_1}\over\leftarrow
i_1)=a^1(I(i_0,\,i_1)\buildrel{\varepsilon\circ\varepsilon_1}\over\leftarrow i_1)-a^1(I(i_0,\,i_1)\buildrel{\varepsilon}\over\leftarrow i_0)\\
\mathfrak
F(\varepsilon)a^1(i_0\buildrel{\varepsilon_2}\over\leftarrow
i_1)=a^1(I(i_0,\,i_1)\buildrel{\varepsilon\circ\varepsilon_2}\over\leftarrow
i_1)-a^1(I(i_0,\,i_1)\buildrel{\varepsilon}\over\leftarrow i_0)
\end{align*}
and, therefore,
$$
\mathfrak
F(\varepsilon)a^1(i_0\buildrel{\varepsilon_1}\over\leftarrow
i_1)=\mathfrak
F(\varepsilon)a^1(i_0\buildrel{\varepsilon_2}\over\leftarrow i_1)
$$
in $\mathfrak F(I(i_0,\,i_1))$. Since $\mathfrak F(\varepsilon)$ is
a monomorphism, we conclude that
\begin{equation}\label{cocc1}
a^1(i_0\buildrel{\varepsilon_1}\over\leftarrow
i_1)=a^1(i_0\buildrel{\varepsilon_2}\over\leftarrow i_1)
\end{equation}
in $\mathfrak F(i_0)$. Now we can take a quasi-initial object
$i\in {\sf Ob}(\sf C)$ and define an element $a^0\in {\prod}^0\mathfrak
F$ by setting
$$
a^0(i_0)=a^1(i_0\leftarrow i)
$$
for arbitrary map $i_0\leftarrow i$ (such a map exists, since $i$
is a quasi-initial object). The equality (\ref{cocc1}) implies
that this is a well-defined element. By definition, we have
$$
-a^1(i_0\leftarrow i_1)=\mathfrak F(i_0\leftarrow
i_1)a^0(i_1)-a^0(i_0)=\delta^0a^0(i_0\leftarrow i_1),
$$and the proof is complete.\ $\Box$
\para
At the moment we are not able to compute higher limits over
categories of free extensions. We present here an approach towards this problem and illustrate it with an application.  In particular,  we show that higher limits  `cover' certain homology functors.
\para
Given a category $\EuScript C$, object $G\in {\sf Ob}(\EuScript C)$, and the category of
free extensions ${\sf Fext}_{\EuScript C}(G),$ suppose we have two  pairs of
functors
\begin{align*}
& \mathcal H_1,\, \mathcal H_2: \EuScript C\to \sf Ab \\
& \mathcal F_1,\, \mathcal F_2: {\sf Fext}(G)\to \sf Ab
\end{align*}
such that, for every $\alpha\in {\sf Fext}_\EuScript C(G),$ there is a natural
4-term exact sequence
\begin{equation}\label{higher1}
0\to \mathcal H_2(G)\to \mathcal F_1(\alpha)\to \mathcal
F_2(\alpha)\to \mathcal H_1(G)\to 0
\end{equation}
which is natural in the sense that every morphism $\beta\to \alpha$
in ${\sf Fext}_\EuScript C(G)$ induces the commutative diagram
$$
\xyma{\mathcal H_2(G) \ar@{^{(}->}[r] \ar@{=}[d] & \mathcal
F_1(\beta) \ar@{->}[r] \ar@{->}[d] & \mathcal F_2(\beta)
\ar@{->>}[r] \ar@{->}[d] & \mathcal H_1(G) \ar@{=}[d]\\ \mathcal
H_2(G) \ar@{^{(}->}[r] & \mathcal F_1(\alpha) \ar@{->}[r] &
\mathcal F_2(\alpha) \ar@{->>}[r] & \mathcal H_1(G)}.
$$
Suppose further that
\begin{equation}\label{va}
\ilimit \mathcal F_2=0.
\end{equation}

\para
The condition (\ref{va}) implies the following exact sequences of
abelian groups: \begin{equation}\label{poi} \xyma{ & \ilimit^1 \mathcal F_1 \ar@{->}[d] \\
\mathcal H_1(G) \ar@{->}[rd]^f\ar@{^{(}->}[r] & \ilimit_{\alpha\in
{\sf Fext}_\EuScript C(G)}^1 C(\alpha) \ar@{->}[r] \ar@{->}[d] &
\ilimit^1\mathcal F_2 \ar@{->}[r] &\ilimit^1 \mathcal H_1(G)\\ &
\ilimit^2\mathcal H_2(G)}
\end{equation}
where $C(\alpha)=\operatorname{coker}\{\mathcal H_2(G)\to \mathcal
F_1(\alpha)\}=\ker\{\mathcal F_2(\alpha)\to \mathcal H_1(G)\},\
\alpha\in {\sf Fext}_{\EuScript C}(G).$
\para
For every $\alpha\in {\sf Fext}_\EuScript C(G)$, fix sections $s_\alpha:
\mathcal H_1(G)\to \mathcal F_2(\alpha)$ and $t_\alpha:
C(\alpha)\to \mathcal F_1(\alpha).$
To describe the map $f$, let $a\in \mathcal H_1(G)$ and let
$\gamma\to \beta\to \alpha$ be a diagram in ${\sf Fext}_{\EuScript C}(G)$ . Consider
the following diagram
$$
\xyma{\mathcal H_2(G) \ar@{^{(}->}[r] \ar@{=}[d] & \mathcal
F_1(\gamma) \ar@{->}[r] \ar@{->}[d]^{\mathcal F_1(\gamma\to
\beta)} & \mathcal F_2(\gamma) \ar@{->>}[r] \ar@{->}[d]^{\mathcal
F_2(\gamma\to \beta)} & \mathcal H_1(G) \ar@{=}[d]\\ \mathcal
H_2(G) \ar@{^{(}->}[r] \ar@{=}[d] & \mathcal F_1(\beta)
\ar@{->}[r] \ar@{->}[d]^{\mathcal F_1(\beta\to \alpha)}& \mathcal
F_2(\beta) \ar@{->>}[r] \ar@{->}[d]^{\mathcal F_2(\beta\to \alpha)} & \mathcal H_1(G)\ar@{=}[d]\\
\mathcal H_2(G) \ar@{^{(}->}[r] & \mathcal F_1(\alpha) \ar@{->}[r]
& \mathcal F_2(\alpha) \ar@{->>}[r] & \mathcal H_1(G)}
$$
Define
\begin{multline*}
a^2(\gamma\to \beta\to \alpha):=\mathcal F_1(\beta\to
\alpha)t_\beta(\mathcal F_2(\gamma\to
\beta)s_\gamma(a)-s_\beta(a))-\\ t_\gamma\mathcal F_2(\beta\to
\alpha)(\mathcal F_2(\gamma\to \beta)s_\gamma(a)-s_\beta(a))
\end{multline*}
The 2-cocycle condition can be checked directly; moreover, note that the element $\xi \in
\ilimit^2\mathcal H_2(G)$ defined by the cocycle $a^2(\gamma\to \beta\to \alpha)$ does not depend
on the choice of sections $s_\alpha,\,t_\alpha$. The map $f:\mathcal H_1(G)\to \ilimit^2\mathcal H_2(G)$ is thus the one given by $a\mapsto \xi$.
\para
We are interested in finding
conditions which imply the triviality of the map $f$.
\para
\begin{prop}\label{yon}
Suppose we have functors
$$
\mathcal F_3,\,\mathcal F_4,\,\mathcal F:\ {\sf Fext}_\EuScript C(G)\to {\sf Gr}
$$
such that the following conditions are satisfied:\\ $(1)$ There is a
natural diagram
\begin{equation}\label{aq}
\xyma{\mathcal H_2(G) \ar@{=}[d] \ar@{^{(}->}[r] & \mathcal
F_1(\alpha) \ar@{=}[d] \ar@{->}[r] & \mathcal F_3(\alpha)
\ar@{->>}[d] \ar@{->>}[r] & \mathcal F_4(\alpha)\ar@{->>}[d]\\
\mathcal H_2(G) \ar@{^{(}->}[r] & \mathcal F_1(\alpha) \ar@{->}[r]
& \mathcal F_2(\alpha) \ar@{->>}[r] & \mathcal H_1(G)}.
\end{equation} $(2)$ The natural map
$$
\ilimit \mathcal F_4\to \mathcal H_1(G)
$$
is an epimorphism.\\ $(3)$ For every $\alpha\in {\sf Fext}_\EuScript C(G)$,
there is a natural monomorphism
$$
\mathcal F_1(\alpha)\to \mathcal F(\alpha)
$$
and natural short exact sequences
\begin{align*}
& 1\to \mathcal F_1(\alpha)\to \mathcal F(\alpha)\to \mathcal
F_4(\alpha)\to 1\\ & 1\to \mathcal H_2(G)\to \mathcal F(\alpha)\to
\mathcal F_3(\alpha)\to 1.
\end{align*}
Then the natural map
$$
\ilimit \mathcal F_4\to \ilimit^2 \mathcal H_2(G)
$$
is the trivial map and therefore the map
$$
f: \mathcal H_1(G)\to \ilimit^2\mathcal H_2(G)
$$
is the zero map.
\end{prop}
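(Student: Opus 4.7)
My plan is to reinterpret $f$ as the composition of two consecutive connecting homomorphisms in inverse-limit long exact sequences, and then use condition $(3)$ to factor this composition through yet a third connecting map, at which point exactness forces vanishing. Diagram (\ref{poi}) presents $f$ as the composite
\[
\mathcal H_1(G)\hookrightarrow \ilimit^1 C\xrightarrow{\;\partial_2\;} \ilimit^2 \mathcal H_2(G),
\]
where the embedding comes from the LES of $0\to C(\alpha)\to \mathcal F_2(\alpha)\to \mathcal H_1(G)\to 0$ together with (\ref{va}), and $\partial_2$ is the connecting map of the LES of $0\to \mathcal H_2(G)\to \mathcal F_1(\alpha)\to C(\alpha)\to 0$.

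First, I would extract from the first short exact sequence of $(3)$, namely $1\to \mathcal F_1\to \mathcal F\to \mathcal F_4\to 1$, a connecting map $\partial_0:\ilimit \mathcal F_4\to \ilimit^1 \mathcal F_1$. Concretely, for $\tilde a=(\tilde a_\alpha)\in \ilimit \mathcal F_4$, choose lifts $\hat a_\alpha\in \mathcal F(\alpha)$ using the surjection $\mathcal F(\alpha)\twoheadrightarrow \mathcal F_4(\alpha)$; then $\partial_0(\tilde a)$ is represented by the $1$-cocycle $\zeta(\beta\to \alpha)=\mathcal F(\beta\to \alpha)\hat a_\beta-\hat a_\alpha$, which lies in $\mathcal F_1(\alpha)=\ker(\mathcal F(\alpha)\to \mathcal F_4(\alpha))$ by compatibility of $\tilde a$.

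The crux is to check the commutativity of
\[
\xymatrix{
\ilimit \mathcal F_4 \ar[r]^{\partial_0} \ar[d] & \ilimit^1 \mathcal F_1 \ar[d] \\
\mathcal H_1(G) \ar@{^{(}->}[r] & \ilimit^1 C,
}
\]
where the left vertical is the surjection of $(2)$ and the right vertical is induced by $\mathcal F_1\twoheadrightarrow C$. I would do this at the level of $1$-cocycles: take the section $s_\alpha(a)\in \mathcal F_2(\alpha)$ of $\mathcal F_2\twoheadrightarrow \mathcal H_1(G)$ to be the image of $\hat a_\alpha$ under the composite projection $\mathcal F(\alpha)\twoheadrightarrow \mathcal F_3(\alpha)\twoheadrightarrow \mathcal F_2(\alpha)$ provided by (\ref{aq}); the cocycle representing $a\in \mathcal H_1(G)\hookrightarrow \ilimit^1 C$ is then $\eta(\beta\to\alpha)=\mathcal F_2(\beta\to\alpha)s_\beta(a)-s_\alpha(a)$, which agrees term by term with the image of $\zeta$ under $\mathcal F_1\twoheadrightarrow C$ because the composite $\mathcal F_1\hookrightarrow \mathcal F\twoheadrightarrow \mathcal F_3\twoheadrightarrow \mathcal F_2$ recovers the bottom-row map of (\ref{aq}).

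Granted the commutativity, the composite $\ilimit \mathcal F_4\xrightarrow{\partial_0}\ilimit^1 \mathcal F_1\to \ilimit^1 C\xrightarrow{\partial_2}\ilimit^2 \mathcal H_2(G)$ equals $f$ precomposed with $\ilimit \mathcal F_4\twoheadrightarrow \mathcal H_1(G)$; but the last two arrows are consecutive in the LES of $0\to \mathcal H_2\to \mathcal F_1\to C\to 0$, so the total composition is zero. This gives the triviality of the natural map $\ilimit \mathcal F_4\to \ilimit^2 \mathcal H_2(G)$, and the surjectivity of $\ilimit \mathcal F_4\to \mathcal H_1(G)$ from $(2)$ then forces $f=0$. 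I expect the main obstacle to be the commutativity of the square above: it hinges on the coherence that the two projections $\mathcal F\twoheadrightarrow \mathcal F_4$ arising from the two short exact sequences of $(3)$ agree, and that $\mathcal F_1\hookrightarrow \mathcal F\twoheadrightarrow \mathcal F_3$ matches the top-row map of (\ref{aq}) — both of which are part of the naturality implicit in $(3)$ and (\ref{aq}) but need to be spelled out carefully before the cocycle computation goes through.
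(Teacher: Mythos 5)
Your argument is correct and is essentially the paper's own proof made explicit: the paper likewise exploits the short exact sequence $1\to\mathcal F_1(\alpha)\to\mathcal F(\alpha)\to\mathcal F_4(\alpha)\to 1$ of condition $(3)$ to lift elements of $\ilimit\mathcal F_4$ and kill the resulting obstruction in $\ilimit^2\mathcal H_2(G)$, only it records this as a single commutative diagram of four-term sequences rather than as your composite of connecting maps $\ilimit\mathcal F_4\to\ilimit^1\mathcal F_1\to\ilimit^1 C\to\ilimit^2\mathcal H_2(G)$, whose last two arrows are consecutive in a long exact sequence and hence compose to zero. The coherence conditions you flag (that the two projections out of $\mathcal F$ and the maps in (\ref{aq}) are mutually compatible) are precisely what the paper subsumes under ``functoriality of the considered constructions,'' so spelling them out is a refinement rather than a deviation.
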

\begin{proof} The proof follows from the functoriality of the
considered constructions and the following natural commutative
diagram:
$$
\xyma{\mathcal H_2(G) \ar@{=}[r] \ar@{=}[d] & \mathcal H_2(G)
\ar@{^{(}->}[d] \ar@{->}[r]^0 & \mathcal F_4(\alpha)
 \ar@{^{(}->}[d] \ar@{=}[r] & \mathcal F_4(\alpha) \ar@{=}[d]\\
\mathcal H_2(G) \ar@{->}[r] & \mathcal F(\alpha) \ar@{->}[r] &
\mathcal F_3(\alpha)\oplus \mathcal F_4(\alpha) \ar@{->}[r]^{\ \ \
\ \ \ \ \ (0,\,id)}
& \mathcal F_4(\alpha)\\
\mathcal H_2(G) \ar@{=}[u] \ar@{^{(}->}[r] & \mathcal F_1(\alpha)
\ar@{^{(}->}[u] \ar@{->}[r] & \mathcal F_3(\alpha) \ar@{^{(}->}[u]
\ar@{->>}[r] & \mathcal F_4(\alpha) \ar@{=}[u]}.
$$\end{proof}

\par We next give examples of functors satisfying (\ref{higher1}).
\para \noindent {\bf Examples.}\\

\noindent 1. Let $G$ be a group, $n\geq 1$, $\{R\rightarrowtail
F\twoheadrightarrow G\}\in {\sf Fext}_{\sf Gr}(G)$, then  there is
a natural  exact
sequence of abelian groups (see \cite{EM}):
\begin{equation}\label{groupex}
0\to H_{2n}(G)\to H_0(F,\,
R_{ab}^{\otimes n})\to H_1(F,\,R_{ab}^{\otimes n-1})\to
H_{2n-1}(G)\to 0.
\end{equation}

\vspace{.25cm} \noindent 2. Let $A$ be an associative algebra over
$\mathbb Q$,\, $n\geq 1$, $\{ I\rightarrowtail R\twoheadrightarrow
A\}\in {\sf Fext}_{{\sf Ass}_\mathbb Q}(A).$ There is a natural
exact sequence (see \cite{Q}):
$$
0\to HC_{2n}(A)\to HH_0(R/I^{n+1})\to H_1(R,\,R/I^n)\to
HC_{2n-1}(A)\to 0.
$$
 \para
Lemma \ref{lemmaem}
implies that, for the example 1 above,  the condition (\ref{va}) is satisfied for the functor
$$
\{R\rightarrowtail F\twoheadrightarrow G\}\mapsto H_1(F,\, R_{ab}^{\otimes n-1})
$$
for $n\geq 1$ (see \cite{EM} for details), hence there is an
isomorphism $H_{2n}(G)\simeq \ilimit H_0(F,\,R_{ab}^{\otimes n}).$
For the simplest case, namely for  $n=1$, functors from the exact sequence
(\ref{groupex}), the diagram (\ref{aq}) can be chosen to be the one given below (with the obvious maps):
$$ \xyma{H_2(G) \ar@{=}[d] \ar@{^{(}->}[r] & R/[R,\,F] \ar@{=}[d]
\ar@{->}[r] & F/(R\cap [F,\,F])
\ar@{->>}[d] \ar@{->>}[r] & G\ar@{->>}[d]\\
H_2(G) \ar@{^{(}->}[r] & R/[F,\,R] \ar@{->}[r] & F/[F,\,F]
\ar@{->>}[r] & H_1(G)}
$$
Proposition \ref{yon} then implies that the natural map $H_1(G)\to
\ilimit^2 H_2(G)$ is the zero map. Consequently, the diagram (\ref{poi})
implies that $H_1(G)$ is contained in a group, which is an
epimorphic image of $\ilimit^1(R/[F,\,R])$. We have thus proved the following:

\begin{Theorem}
If $G$ is not a perfect group, then $\ilimit^1(R/[F,\,R])$ is non-trivial.
\end{Theorem}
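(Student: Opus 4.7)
The plan is to instantiate the general machinery developed around diagram~(\ref{poi}) and Proposition~\ref{yon} with the four-term exact sequence from Example~1 at $n=1$, and then read off the conclusion from the non-perfectness of $G$.

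First I would set $\mathcal H_2(G)=H_2(G)$, $\mathcal H_1(G)=H_1(G)$, and take
$$
\mathcal F_1(\alpha)=R/[F,R],\qquad \mathcal F_2(\alpha)=F/[F,F],
$$
so that (\ref{groupex}) specialised to $n=1$ furnishes the natural $4$-term exact sequence required in (\ref{higher1}); the identification $\mathcal H_2(G)=(R\cap[F,F])/[F,R]$ is Hopf's formula. The hypothesis (\ref{va}), namely $\ilimit F/[F,F]=0$, is verified via Lemma~\ref{lemmaem}: the coproduct of two free presentations of $G$ in ${\sf Fext}_{\sf Gr}(G)$ is built from the free product $F_1\ast F_2$ (with the induced surjection onto $G$), and abelianization carries the two coproduct inclusions to the summand embeddings of a direct sum, so hypothesis~$(ii)$ of Lemma~\ref{lemmaem} holds.

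Next I would supply the input needed for Proposition~\ref{yon} by choosing
$$
\mathcal F_3(\alpha)=F/(R\cap[F,F]),\quad \mathcal F_4(\alpha)=G\ \text{(constant functor)},\quad \mathcal F(\alpha)=F/[F,R].
$$
Condition~$(1)$ is precisely the diagram exhibited in the text preceding the theorem. Condition~$(2)$ is immediate, since $\mathcal F_4$ is constant and the natural map $G\twoheadrightarrow G_{ab}=H_1(G)$ is surjective. Condition~$(3)$ is again Hopf's formula: the inclusion $R/[F,R]\hookrightarrow F/[F,R]$ has quotient $F/R=G=\mathcal F_4$, while the subgroup $H_2(G)=(R\cap[F,F])/[F,R]\hookrightarrow F/[F,R]$ has quotient $F/(R\cap[F,F])=\mathcal F_3$; both sequences are manifestly natural in morphisms of ${\sf Fext}_{\sf Gr}(G)$. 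Proposition~\ref{yon} then yields that $f:H_1(G)\to \ilimit^2 H_2(G)$ is the zero map.

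The conclusion is now read off from diagram~(\ref{poi}): since $f=0$, the injection $H_1(G)\hookrightarrow \ilimit^1 C(\alpha)$ factors through the image of $\ilimit^1(R/[F,R])\to \ilimit^1 C(\alpha)$, exhibiting $H_1(G)$ as a subgroup of a quotient of $\ilimit^1(R/[F,R])$. If $G$ is not perfect then $H_1(G)=G_{ab}\neq 0$, forcing $\ilimit^1(R/[F,R])\neq 0$. The only non-routine point in this plan is the verification of condition~$(3)$ of Proposition~\ref{yon} together with the naturality of its two short exact sequences in $\alpha\in{\sf Fext}_{\sf Gr}(G)$, and even there the maps are dictated by the inclusions $[F,R]\subseteq R\cap[F,F]\subseteq R\subseteq F$.
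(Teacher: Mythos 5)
Your proposal is correct and follows essentially the same route as the paper: the $n=1$ case of the sequence (\ref{groupex}) with $\mathcal F_1=R/[F,R]$, $\mathcal F_2=F/[F,F]$, the vanishing $\ilimit F/[F,F]=0$ via Lemma \ref{lemmaem}, the diagram (\ref{aq}) with $\mathcal F_3=F/(R\cap[F,F])$, $\mathcal F_4=G$, $\mathcal F=F/[F,R]$, and Proposition \ref{yon} feeding into (\ref{poi}). You even make explicit the choice of $\mathcal F$ that the paper leaves implicit in its "obvious maps" diagram.
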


\section{Concluding remarks and questions} \vspace{.5cm} Observe that given an
object $G\in {\sf Ob}(\EuScript C),$ one can consider the category ${\sf
Fext}_2(G)$ of double (resp. triple etc) presentations of $G$. For
 simplicity, let us assume that we work in the category of
groups. The objects of ${\sf Fext}_2(G)$ are triples
$(F,\,R_1,\,R_2),$ where $F$ is a group, $R_1,\, R_2$ normal subgroups
in $F$, such that $F/R_1R_2=G$. The morphisms in ${\sf Fext}_2(G)$
are the diagrams of the form
$${\tiny
\xyma{ & R_1\cap R_2  \ar@{->}[rr]
\ar@{->}[ld] \ar@{-}[d] && R_2 \ar@{->}[dd] \ar@{->}[ld]\\
R_1 \ar@{->}[dd] \ar@{->}[rr] & \ar@{->}[d] & F \ar@{->}[dd]\\
& R_1'\cap R_2' \ar@{-}[r] \ar@{->}[ld] & \ar@{->}[r] & R_2' \ar@{->}[ld]\\
R_1' \ar@{->}[rr] & & F' } }
$$
which induce the identity isomorphism $F/R_1R_2\to F'/R_1'R_2'.$
It would be of interest to examine limits of functors over the category ${\sf Fext}_2(G)$.
For example, note that, given a group $G$, there is a
natural homomorphism
$$
H_3(G)\to \ilimit_{(F,\,R_1,\,R_2)\in {\sf Fext}_2(G)}\frac{R_1\cap
R_2}{[R_1,\,R_2][F,\,R_1\cap R_2]};$$for the construction of this map see
 the homology exact sequence in \cite{BG}.

\para
In the same way, one can make variations of Quillen's description (\ref{func22})
of cyclic homology. Given an associative algebra
$A$ over $\mathbb Q$, consider the category ${\sf Fext}_2(A)$ whose
objects  are the triples $(R,\,I_1,\,I_2)$, where $R$ is a
free algebra $I_1,\,I_2$ are ideals in $R$ and $R/(I_1+I_2)=A$. The
description (\ref{func22}) implies, for example, that for $n\geq
2,\ n>k\geq 1,$ there is a natural morphism
$$
\ilimit_{(R,\,I_1,\,I_2)\in {\sf
Fext}_2(A)}\frac{R}{I_1^{n+1-k}I_2^k+I_1^{k}I_2^{n+1-k}+[R,R]}\to
HC_{2n}(A)
$$and its investigation may be of interest for cyclic homolgy.
\para
Finally, one now knows how to define
even-dimensional homology of groups, Lie algebras, cyclic homology
of associative algebras as limits of certain functors over the
categories of  extensions. What can one say about higher limits
of the functors yielding this relationship?
\para
\section{Acknowledgement}
The authors would like to thank Ioannis Emmanouil and Gosha
Sharygin for useful discussions and important suggestions.

\newpage
\par\vspace{.5cm}\noindent
Roman Mikhailov\\Steklov Mathematical Institute\\
Department of Algebra\\
Gubkina 8\\
Moscow 119991\\
Russia\\
email: romanvm@mi.ras.ru
\par\vspace{.5cm}\noindent
Inder Bir S. Passi\\
INSA Senior Scientist\\
Centre for Advanced Study in Mathematics\\
Panjab University \\
Chandigarh 160014\\ India
\par\noindent
and
\par\noindent
Indian Institute of Science Education and Research  Mohali\\
MGSIPA Complex, Sector 19\\
Chandigarh 160019\\
India\\
email: ibspassi@yahoo.co.in

\end{document}